\newtheorem{theorem}[]{Theorem}
\newtheorem{lemma}[equation]{Lemma}
\newtheorem{corollary}[equation]{Corollary}
\newtheorem{proposition}[equation]{Proposition}
\theoremstyle{definition}
\newtheorem{remark}[equation]{Remark}
\numberwithin{equation}{section}
\newcommand{\Z}{\mathbb{Z}}
\newcommand{\R}{\mathbb{R}}
\newcommand{\PL}{\mathrm{PL}}
\newcommand{\PE}{\mathrm{PE}}
\newcommand{\uPE}{\underline{\mathrm{PE}}}
\renewcommand{\L}{\mathrm{L}}
\renewcommand{\O}{\mathcal{O}}
\renewcommand{\phi}{\varphi}
\newcommand{\I}{\mathcal{I}}
\newcommand{\J}{\mathcal{J}}
\newcommand{\im}{\mathrm{im}}
\newcommand{\K}{\mathrm{K}}
\newcommand{\Span}{\mathrm{Span}}
\newcommand{\ZZ}{\Z}
\title{$\K$-rings of smooth toric varieties via piecewise-exponential functions}
\author[M.~Chan]{Melody Chan}
\address{Department of Mathematics, Brown University, United States of America}
\email{melody\_chan@brown.edu}
\author[E.~Clader]{Emily Clader}
\address{Department of Mathematics, San Francisco State University, United States of America}
\email{eclader@sfsu.edu}
\author[C.~Klivans]{Caroline Klivans}
\address{Department of Mathematics, Brown University, United States of America}
\email{caroline\_klivans@brown.edu}
\author[D.~Ross]{Dustin Ross}
\address{Department of Mathematics, San Francisco State University, United States of America}
\email{rossd@sfsu.edu}
\begin{document}

\begin{abstract}
    We describe an explicit presentation of the ring of integral piecewise-exponential functions on a unimodular fan as a quotient of the Stanley--Reisner ring of the fan. This gives rise to a presentation of $\K$-rings of smooth toric varieties that is parallel to the well-known presentation of integral Chow rings as quotients of Stanley--Reisner rings.
\end{abstract}

\maketitle

\vspace{-.5cm}
\section{Introduction}

The purpose of this note is to provide a  presentation of the Grothendieck $\K$-ring $K(X_\Sigma)$ of a smooth toric variety as a quotient of the Stanley--Reisner ring of the corresponding fan $\Sigma$.  There are closely-related results and certain special cases in the literature, as we discuss below, but to our knowledge, our main results (Theorems~\ref{thm2} and~\ref{thm1}) are new.

Let $\Sigma$ be a unimodular fan in a real vector space $N_\R$ with lattice $N\subseteq N_\R$, and consider the smooth toric variety $X_\Sigma$ associated to $\Sigma$. For each ray $\rho\in\Sigma(1)$, let $u_\rho\in N$ denote its primitive generator and let $\O(D_\rho)$ denote the line bundle on $X_\Sigma$ whose first Chern class is the toric divisor $D_\rho$.  Our main result is the following.

\begin{theorem}\label{thm2}
    For any smooth toric variety $X_{\Sigma}$, the ring homomorphism
    \begin{align*}
        \Z[x_\rho\mid \rho\in\Sigma(1)] &\longrightarrow K(X_\Sigma)\\ x_\rho&\mapsto 1-[\O(D_\rho)]
    \end{align*}
    induces an isomorphism
    \[
    \frac{\Z[x_\rho\mid\rho\in\Sigma(1)]}{\I+\J}\cong K(X_\Sigma),
    \]
where
    \[
    \I=\langle x_{\rho_1}\cdots x_{\rho_k}\mid\rho_1,\dots,\rho_k\text{\normalfont\ do not lie on a common cone of }\Sigma\rangle
    \]
    and
    \[
    \J=\Big\langle\prod_{\substack{\rho \text{ such that}\\ \langle m,u_\rho\rangle>0}}(1-x_\rho)^{\langle m,u_\rho\rangle}-\prod_{\substack{\rho \text{ such that}\\ \langle m,u_\rho\rangle<0}}(1-x_\rho)^{-\langle m,u_\rho\rangle}\;\Big|\; m\in N^\vee \Big\rangle.
    \]
\end{theorem}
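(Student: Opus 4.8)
Write $R=\Z[x_\rho\mid\rho\in\Sigma(1)]/(\I+\J)$ and let $\bar\phi\colon R\to K(X_\Sigma)$ be the homomorphism in the statement; the plan is to show it is well defined, surjective, and injective, with essentially all of the content in the last step. \textbf{Well-definedness.} That $\J$ lies in the kernel of $x_\rho\mapsto 1-[\O(D_\rho)]$ is immediate: for $m\in N^\vee$ the character $\chi^m$ is a rational function on $X_\Sigma$ with divisor $\sum_\rho\langle m,u_\rho\rangle D_\rho$, so $\prod_\rho[\O(D_\rho)]^{\langle m,u_\rho\rangle}=1$ in $K(X_\Sigma)$, and separating the positive from the negative exponents is exactly the stated relation. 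For $\I$, the structure sequence $0\to\O(-D_\rho)\to\O\to\O_{D_\rho}\to 0$ gives $x_\rho=1-[\O(D_\rho)]=-[\O(D_\rho)]\cdot[\O_{D_\rho}]$, so $x_\rho$ generates the same ideal of $K(X_\Sigma)$ as $[\O_{D_\rho}]$; since $\Sigma$ is unimodular the divisors $D_{\rho_1},\dots,D_{\rho_k}$ meet transversally, so a product $x_{\rho_1}\cdots x_{\rho_k}$ equals, up to a unit, the class $[\O_{D_{\rho_1}\cap\cdots\cap D_{\rho_k}}]$, which vanishes once the $\rho_i$ lie on no common cone. \textbf{Surjectivity.} Here I would use that $K(X_\Sigma)=G_0(X_\Sigma)$ by smoothness, that $G_0(X_\Sigma)$ is generated by the orbit-closure classes $[\O_{V(\sigma)}]$ (the filtration of $X_\Sigma$ by invariant closed subsets, together with $G_0$ of a torus being $\Z$), and that by unimodularity and the Koszul resolution $[\O_{V(\sigma)}]=\prod_{\rho\in\sigma(1)}[\O_{D_\rho}]$; since each $[\O_{D_\rho}]=1-(1-x_\rho)^{-1}$ is a polynomial in the nilpotent element $x_\rho$, every generator of $K(X_\Sigma)$ lies in the image of $\bar\phi$.

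\textbf{Injectivity via associated graded.} For the core step I would pass to associated graded rings and compare with the classical Danilov--Jurkiewicz presentation $A^\bullet(X_\Sigma)=\Z[x_\rho\mid\rho\in\Sigma(1)]/(\I+\I_{\mathrm{lin}})$ of the Chow ring, where $\I_{\mathrm{lin}}=\big\langle\,\sum_\rho\langle m,u_\rho\rangle x_\rho\mid m\in N^\vee\,\big\rangle$. Filter $R$ by the images $F^pR$ of the degree-$\ge p$ part of the polynomial ring, and filter $K(X_\Sigma)$ by codimension of support; for a smooth toric variety the natural map is an isomorphism of graded rings $A^\bullet(X_\Sigma)\xrightarrow{\ \sim\ }\mathrm{gr}^\bullet K(X_\Sigma)$ carrying $[D_\rho]$ to the class of $[\O_{D_\rho}]$ (the integrality of this uses that smooth toric varieties are cellular). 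The identity $x_\rho=-[\O(D_\rho)]\cdot[\O_{D_\rho}]$ shows $\bar\phi(x_\rho)\in F^1K(X_\Sigma)$, so $\bar\phi$ is filtered and induces a surjection $\mathrm{gr}\,\bar\phi\colon\mathrm{gr}\,R\to\mathrm{gr}^\bullet K(X_\Sigma)\cong A^\bullet(X_\Sigma)$. On the other hand $\I$ is homogeneous and the lowest-degree form of the generator of $\J$ attached to $m$ is $-\sum_\rho\langle m,u_\rho\rangle x_\rho$, so the initial ideal of $\I+\J$ contains $\I+\I_{\mathrm{lin}}$ and there is a surjection $A^\bullet(X_\Sigma)\twoheadrightarrow\mathrm{gr}\,R$. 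The composite $A^\bullet(X_\Sigma)\twoheadrightarrow\mathrm{gr}\,R\xrightarrow{\mathrm{gr}\,\bar\phi}A^\bullet(X_\Sigma)$ is then a surjective endomorphism of a finitely generated $\Z$-module, hence an isomorphism, so both factors are isomorphisms and in particular $\mathrm{gr}\,\bar\phi$ is an isomorphism. Since $A^\bullet(X_\Sigma)$ is free of finite rank and concentrated in degrees $\le\dim X_\Sigma$, so is $\mathrm{gr}\,R$; thus $F^pR$ stabilizes for large $p$ to $\bigcap_pF^pR$, and a short diagram chase using that $\mathrm{gr}\,\bar\phi$ is an isomorphism (and that $F^pK(X_\Sigma)=0$ for $p>\dim X_\Sigma$) identifies $\ker\bar\phi$ with $\bigcap_pF^pR$. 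It then remains only to show this intersection is $0$, after which $\bar\phi$ is injective and, being surjective, an isomorphism.

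\textbf{The main obstacle.} I expect the hard part to be precisely the separatedness $\bigcap_pF^pR=0$: because $\J$ is not homogeneous this cannot be read off the generators, and one must genuinely use the multiplicative relations --- concretely, that modulo $\I$ each class $\overline{1-x_\rho}$ becomes a unit in $R$, which should be extracted from the generators of $\J$ attached to the $m\in N^\vee$ dual to the rays of a maximal cone through $\rho$, so that the ideal $(\overline{x_\rho})$ of $R$ is topologically nilpotent. A cleaner organization, and presumably the one behind the abstract, is to first prove the purely combinatorial statement (this is Theorem~\ref{thm1}) that the ring $\uPE(\Sigma)$ of integral piecewise-exponential functions on $\Sigma$ has presentation $\Z[x_\rho\mid\rho\in\Sigma(1)]/(\I+\J)$ via $x_\rho\mapsto 1-e^{\delta_\rho}$, and then to deduce Theorem~\ref{thm2} from the identification $K(X_\Sigma)\cong\uPE(\Sigma)$ sending $[\O(D_\rho)]\mapsto e^{\delta_\rho}$; this confines all of the fan combinatorics --- including the separatedness issue above and the need to argue uniformly for non-complete $\Sigma$ --- to the proof of Theorem~\ref{thm1}.
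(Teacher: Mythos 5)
Your proposal correctly guesses, in its final paragraph, the structure the paper actually uses: establish the combinatorial Theorem~\ref{thm1} about $\uPE(\Sigma)$ first, then transport the result to $K(X_\Sigma)$ via the Brion--Vergne isomorphism $\PE(\Sigma)\cong K_T(X_\Sigma)$ and Merkurjev's description of $K(X_\Sigma)$ as a quotient of $K_T(X_\Sigma)$. But having named this route, you do not carry it out, and the direct argument you do sketch has two serious gaps.

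First, the separatedness $\bigcap_p F^pR=0$ that you flag as ``the main obstacle'' is left unproved, and it is not a technicality: it is precisely where all the content of the theorem sits, because $\J$ is inhomogeneous and the topology on $R$ is not visibly complete or separated. The paper attacks this head-on (inside the proof of Theorem~\ref{thm1}) via an entirely different mechanism: after computing $\ker(\widetilde\phi)=\I$ for the unreduced lift to $\PE(\Sigma)$ (Proposition~\ref{prop:kerneltilde}) and deducing $\ker(\phi_t)=\I_t+\J_t$ after localizing at $t=\prod_\rho(1-x_\rho)$ (Corollaries~\ref{cor:surj1} and~\ref{cor:surj2}), it shows that $\I+\J$ is already saturated with respect to $t$. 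This is done by checking that $\I+\J+\langle 1-x_{\rho_0}\rangle=\langle1\rangle$ for each ray $\rho_0$, using the Stanley--Reisner primary decomposition $\I=\bigcap_\sigma\I_\sigma$ together with a suitably chosen $m\in M$, and then invoking primary decomposition of $\I+\J$ to conclude $(\I+\J:t^k)=\I+\J$ for all $k$ (Proposition~\ref{prop:unlocalize}). No filtration or associated-graded argument appears.

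Second, the claim that the natural map $A^\bullet(X_\Sigma)\to\mathrm{gr}^\bullet K(X_\Sigma)$ is an \emph{integral} isomorphism for every smooth (not necessarily complete) toric variety is not established and is precisely the kind of statement that fails in the generality needed here. The paper emphasizes that $K(X_\Sigma)$ can have torsion for non-complete smooth $\Sigma$ (citing an example in \cite{CCKR}), and it is exactly this phenomenon that blocks the earlier proofs of Sankaran and of Larson--Li--Payne--Proudfoot from extending. If the topological filtration on $K(X_\Sigma)$ is finite and separated and all graded pieces were identified with a free Chow ring, $K(X_\Sigma)$ itself would be free, contradicting the existence of torsion; so the integral comparison you want cannot hold uniformly. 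Any argument for Theorem~\ref{thm2} that routes through $A^\bullet\cong\mathrm{gr}^\bullet K$ is therefore restricted to essentially the same special cases as the prior literature.

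On the parts you do complete: the well-definedness and surjectivity sketches are fine and track the paper (the paper's Remark after Proposition~\ref{cor:invert} explicitly notes the coniveau-filtration alternative you use for surjectivity, while its own combinatorial proof goes via the nilpotency statement $([e^{\delta_\rho}]-1)^{d+1}=0$ in Proposition~\ref{prop:nilpotency}). The substance you are missing is the proof of Theorem~\ref{thm1} itself, in particular Propositions~\ref{prop:kerneltilde} and~\ref{prop:unlocalize}.
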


The Stanley--Reisner ring of $\Sigma$ is the quotient $\Z[x_\rho\mid \rho\in \Sigma(1)]/\mathcal{I}$, so Theorem~\ref{thm2} indeed presents $K(X_{\Sigma})$ as a quotient of this ring.

The proof of Theorem~\ref{thm2} 
leverages the known isomorphism \cite{BrionVergne,Merkurjev}\begin{equation}\label{eq:K-uPE}K(X_\Sigma)\cong\uPE(\Sigma),\end{equation} where $\uPE(\Sigma)$ denotes the ring of integral piecewise-exponential functions on $\Sigma$, modulo the relations generated by differences of exponentials of linear functions; see Section~\ref{sec:prelim} for the precise definition.  Specifically, we deduce Theorem~\ref{thm2} as a consequence of the identification~\eqref{eq:K-uPE} and the following main combinatorial result of the paper.

\begin{theorem}\label{thm1}
    The ring homomorphism
    \begin{align*}
    \phi\colon\Z[x_\rho\mid\rho\in\Sigma(1)]&\rightarrow\uPE(\Sigma)\\
    \phi(x_\rho) &= 1-[e^{\delta_\rho}]
    \end{align*}induces an isomorphism
    \[
    \frac{\Z[x_\rho\mid\rho\in\Sigma(1)]}{\I+\J}\cong\uPE(\Sigma).
    \]
\end{theorem}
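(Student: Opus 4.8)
The plan is to establish the isomorphism in two stages: first show $\phi$ is surjective with $\I + \J$ contained in the kernel, and then prove that the induced map on the quotient is injective, most likely by exhibiting a compatible additive basis or by induction on the fan.

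For \emph{well-definedness}, I would check that $\I$ and $\J$ lie in $\ker\phi$. That $\I \subseteq \ker\phi$ should follow from the fact that if $\rho_1, \dots, \rho_k$ do not span a cone of $\Sigma$, then the Courant functions $\delta_{\rho_1}, \dots, \delta_{\rho_k}$ have disjoint supports on each maximal cone, so the product $\prod_i (1 - [e^{\delta_{\rho_i}}])$ vanishes cone-by-cone once one expands and uses that $\delta_{\rho_i}\delta_{\rho_j}$ is supported on the common cones only; the precise combinatorial identity needs the definition of piecewise-exponential multiplication, but this should be formal. That $\J \subseteq \ker\phi$ is where the ``modulo exponentials of linear functions'' relation enters: for $m \in N^\vee$, the linear function $\langle m, -\rangle$ restricted to $\Sigma$ equals $\sum_\rho \langle m, u_\rho\rangle \delta_\rho$ as a piecewise-linear function (since both sides agree on rays and are linear on cones), so $e^{\langle m,-\rangle} = \prod_\rho [e^{\delta_\rho}]^{\langle m, u_\rho\rangle}$ in $\PE(\Sigma)$; splitting into positive and negative exponents, substituting $[e^{\delta_\rho}] = 1 - \phi(x_\rho)$, and using that $e^{\langle m, - \rangle}$ is killed in $\uPE(\Sigma)$ gives exactly the generators of $\J$. \emph{Surjectivity} of the induced map should follow because $\uPE(\Sigma)$ is generated as a $\Z$-algebra by the classes $[e^{\delta_\rho}]$ (every piecewise-linear function on a unimodular fan is an integer combination of Courant functions, hence every piecewise-exponential is a Laurent monomial in the $[e^{\delta_\rho}]$, and the $\J$-relations let us clear the negative powers).

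The \textbf{main obstacle is injectivity} of the induced homomorphism $\overline\phi\colon \Z[x_\rho]/(\I+\J) \to \uPE(\Sigma)$. The cleanest route is probably a dimension/rank count fan-by-fan via an additive basis: on the $K$-theory side (equivalently $\uPE(\Sigma)$), there is a known additive basis indexed by cones of $\Sigma$ — for instance the classes of structure sheaves $[\O_{V(\sigma)}]$ of the torus-invariant subvarieties, or equivalently products $\prod_{\rho \in \sigma(1)}(1 - [e^{\delta_\rho}])$ together with a ``completion'' term. I would show that the monomials $\prod_{\rho \in \sigma(1)} x_\rho$ over $\sigma \in \Sigma$, suitably corrected, span $\Z[x_\rho]/(\I+\J)$ over $\Z$ (using $\I$ to kill non-face monomials and $\J$ to reduce the ``extra'' variables), and that $\overline\phi$ sends this spanning set to the known basis of $\uPE(\Sigma)$; surjectivity plus the fact that a spanning set mapping to a basis must itself be a basis then forces injectivity. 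An alternative is induction on $|\Sigma(1)|$ using a deletion/star-subdivision or a Mayer–Vietoris-type decomposition of $\uPE(\Sigma)$ along a ray, matching it to the corresponding algebraic decomposition of the quotient ring; this mirrors the standard proof of the Chow-ring presentation but requires care because $K$-theory relations are multiplicative rather than additive, so the ``extra'' variable $x_{\rho_0}$ does not simply split off — the relations in $\J$ couple it to the others. Controlling exactly this coupling, and verifying that no further relations beyond $\I+\J$ are needed, is the crux of the argument.

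Throughout, I would lean on the explicit description of $\PE(\Sigma)$ and $\uPE(\Sigma)$ from Section~\ref{sec:prelim}: a piecewise-exponential function is a compatible collection of exponential-polynomial (i.e. $\Z[e^{\pm m}]$-valued) functions on the maximal cones, and $\uPE(\Sigma)$ is the quotient by the ideal generated by globally-defined differences $e^{\ell} - e^{\ell'}$ of exponentials of \emph{linear} functions $\ell, \ell'$. The key structural input making the fan-by-fan count work is unimodularity, which guarantees that on each maximal cone the Courant functions $\delta_\rho$ for $\rho \in \sigma(1)$ restrict to a $\Z$-basis of the linear functions on $\sigma$, so that $[e^{\delta_\rho}]\mid_\sigma$ for $\rho \in \sigma(1)$ behave like independent variables $t_\rho^{\pm 1}$ and the local picture is exactly the $K$-theory of affine space $\mathbb{A}^{\dim\sigma}$ — trivial, as it must be. Gluing these local trivializations is governed precisely by the relations $\I$ (compatibility of supports) and $\J$ (compatibility of the exponential coordinates across cones), which is the conceptual reason to expect that $\I + \J$ is the full kernel.
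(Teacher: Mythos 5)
Your proposed route to injectivity has a fatal flaw that the paper's introduction explicitly warns about. You suggest exhibiting an additive $\Z$-basis of $\uPE(\Sigma)$ indexed by cones (e.g.\ the classes $[\O_{V(\sigma)}]$, equivalently $\prod_{\rho\in\sigma(1)}(1-[e^{\delta_\rho}])$), showing the corresponding monomials span the quotient $\Z[x_\rho]/(\I+\J)$, and concluding by ``a spanning set mapping to a basis is a basis.'' But $\uPE(\Sigma)\cong K(X_\Sigma)$ is not torsion-free for general unimodular $\Sigma$ (see \cite[Example~2.12]{CCKR}), so no such $\Z$-basis exists, and the counting argument collapses. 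This is precisely the obstruction that prevented the earlier proofs of Sankaran (complete fans) and Larson--Li--Payne--Proudfoot (Bergman fans) from extending to the general case, as the paper says up front; a basis-matching argument is exactly the thing the paper had to replace. Your secondary suggestion (Mayer--Vietoris induction on $|\Sigma(1)|$) is not what the paper does either, and you correctly flag that controlling the coupling in $\J$ is the crux, but you do not resolve it. Separately, your surjectivity step is incomplete: the claim that ``the $\J$-relations let us clear the negative powers'' is exactly what needs proof, and it is not a formal consequence of the generators of $\J$. The paper instead proves the nilpotency $(1-[e^{\delta_\rho}])^{d+1}=0$ directly (by induction on $\dim\Sigma$ using star fans and an inclusion--exclusion decomposition $e^f=\sum_\sigma\mu(\sigma)e^{f_\sigma}$), which yields $[e^{-\delta_\rho}]=\sum_{i=1}^d(1-[e^{\delta_\rho}])^i\in\im\phi$ as a finite geometric series.

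The paper's actual strategy for $\ker\phi=\I+\J$ is quite different and worth knowing. One lifts $\phi$ to $\widetilde\phi\colon\Z[x_\rho]\to\PE(\Sigma)$ (the \emph{un-quotiented} ring of piecewise-exponentials) and proves $\ker\widetilde\phi=\I$ by a linear-independence argument: order the rays, pick the lexicographically maximal exponent tuple $\widehat a$ appearing in a putative dependence, evaluate at points $u=\sum\epsilon_{\rho_i}u_{\rho_i}$ in $\sigma_{\widehat a}$ with $\epsilon$'s chosen in a nested scale, and let the dominant exponential term isolate the coefficient $\lambda_{\widehat a}$. Localizing at $t=\prod_\rho(1-x_\rho)$ then gives $\ker\phi_t=\I_t+\J_t$ over the Laurent ring (recovering the Vezzosi--Vistoli/Merkurjev presentation). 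The genuinely new content is the \emph{unlocalization}: showing $(\I+\J)_t\cap\Z[x_\rho]=\I+\J$, i.e.\ $(\I+\J:t^k)=\I+\J$ for all $k$. This is done via a primary decomposition $\I+\J=\bigcap Q_i$, after first proving $\I+\J+\langle 1-x_{\rho_0}\rangle=\langle1\rangle$ for every ray $\rho_0$ (which uses the Stanley--Reisner decomposition $\I=\bigcap_\sigma\I_\sigma$ together with a well-chosen $m\in M$ positive on $\sigma$ to produce a unit from a generator of $\J$). That claim forces each $1-x_{\rho_0}$ to avoid every associated prime $P_i$, hence $t\notin P_i$ and $(Q_i:t^k)=Q_i$. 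None of this commutative-algebra machinery appears in your proposal, and it is exactly the part that makes the integral, non-complete, possibly-torsion case go through.
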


The presentation of Theorem~\ref{thm2} was previously proved for special types of smooth toric varieties: in the case that $X_\Sigma$ is projective, it was proved by Sankaran and Uma \cite[Theorem~1.2]{SankaranUma}, and this was later extended to complete fans by Sankaran \cite[Theorem~2.2]{Sankaran}; in the case that $\Sigma$ is the Bergman fan of a matroid, an equivalent formulation of Theorem~\ref{thm2} was proved by Larson, Li, Payne, and Proudfoot \cite[Theorem 5.2]{LLPP}, and their proof readily extends to any fan for which $K(X_\Sigma)$ is torsion free, thereby generalizing the complete case. Since $K(X_\Sigma)$ need not be torsion free in general (see \cite[Example~2.12]{CCKR}), a new argument is needed to prove Theorem~\ref{thm2} in full generality.

On the other hand, an analogue of Theorem~\ref{thm2} after inverting $1-x_\rho$ for every $\rho\in\Sigma(1)$ was proved by Vezzosi and Vistoli over two decades ago.  In our notation, \cite[Theorem~6.4]{VezzosiVistoli} gives a presentation of the {\em equivariant} K-ring of $X_\Sigma$, with respect to its dense torus $T$, as 
\begin{equation}\label{eq:vv}
K_T(X_\Sigma) \cong \frac{\ZZ[x_\rho \mid \rho \in \Sigma(1)]_t}{\mathcal{I}_t},
\end{equation}
where $t = \prod_{\rho\in\Sigma(1)}(1-x_\rho)$ and the subscript denotes localization.  A result of Merkujev \cite{Merkurjev} then allows for a presentation of the non-equivariant K-ring $K(X_\Sigma)$ as an appropriate quotient of~\eqref{eq:vv}, so one obtains a presentation of $K(X_\Sigma)$ as a quotient of the Laurent polynomial ring in variables $1-x_\rho$.  What is novel about our work is that we achieve a presentation as a quotient of the {\em polynomial} ring in these variables---the polynomial presentation is not at all obvious from the Laurent presentation. The main argument is contained in Proposition~\ref{prop:unlocalize} below.

Despite the connections to previous algebro-geometric work, we structured this note to include a self-contained and rather elementary proof of Theorem~\ref{thm1}, given entirely within the context of the combinatorics and algebra of piecewise-exponential functions.  Providing proofs within the settings of piecewise-exponential or piecewise-polynomial functions on fans, without invoking theorems in algebraic geometry, is in the spirit of previous combinatorial work including Billera's study of piecewise-polynomial functions on simplicial complexes \cite{Billera} and McMullen's polytope algebras \cite{McMullen}.  Indeed, some of our initial motivation for the paper \cite{CCKR} that led to this work came from an attempt to understand aspects of McMullen's polytope algebra in the setting of incomplete polyhedral fans.

Lastly, we note that, in addition to the works cited elsewhere in this paper, there exists a broad and deep collection of literature on the K-rings of toric varieties and their generalizations; see \cite{MorelliK,SankaranUma2,BorisovHorja,Uma,JoshuaKrishna,DKU,DKU2,Uma2,SarkarUma,Uma3}.

\subsection*{Acknowledgments}

An incomplete proof of Theorem~\ref{thm1} appeared in an early draft of our paper \cite{CCKR}, and we thank Matt Larson for pointing out the error; the subtleties necessary to patch the proof are what led to this paper.  We also thank an anonymous referee for suggesting a simplification of our original proof of Proposition~\ref{prop:unlocalize}.  This work began while E.C. and D.R. were visiting Brown University for sabbatical; they warmly acknowledge the Brown Department of Mathematics for their support and for providing a welcoming and stimulating research environment. M.C.~was supported by NSF CAREER DMS--1844768, FRG DMS--2053221, and DMS--2401282. E.C. was supported by NSF CAREER DMS--2137060.  D.R. was supported by NSF DMS--2302024 and DMS--2001439.

\section{Preliminary conventions and definitions}\label{sec:prelim}

Throughout, $N$ denotes a finitely-generated free abelian group, and $N_\R=\R\otimes_\Z N$ the associated vector space. We let $M$ and $M_\R$ denote the duals of $N$ and $N_\R$, with pairing written $\langle-,-\rangle$. Throughout the paper, $\Sigma$ denotes a unimodular rational polyhedral fan in $N_\R$ of dimension $d$, the rays of which are denoted $\Sigma(1)$, and for each cone $\sigma\in\Sigma$, the rays of $\sigma$ are denoted $\sigma(1)$. Given a ray $\rho\in\Sigma(1)$, we let $u_\rho\in N$ denote the primitive ray generator of $\rho$. The support of a fan $\Sigma$ is denoted $|\Sigma|$.  For cones $\tau,\sigma \in \Sigma$, write $\tau \preceq \sigma$ if $\tau$ is a face of $\sigma$. 

A function $\ell:|\Sigma|\rightarrow\R$ is said to be \textbf{integral linear on $\Sigma$} if $\ell=m|_{|\Sigma|}$ for some $m\in M$. The group of integral linear functions on $\Sigma$ is denoted $\L(\Sigma)$. A function $f:|\Sigma|\rightarrow\R$ is said to be \textbf{integral piecewise-linear on $\Sigma$} if, for every $\sigma\in\Sigma$, there exists $m\in M$ such that $f|_\sigma=m|_\sigma$. The group of integral piecewise-linear functions on $\Sigma$ is denoted $\PL(\Sigma)$. A function $F:|\Sigma|\rightarrow\R$ is said to be \textbf{integral piecewise-exponential on $\Sigma$} if, for every $\sigma\in\Sigma$, there exist $a_1,\dots,a_n\in\Z$ and $m_1,\dots,m_n\in M$ such that
\begin{equation}
\label{eq:Fsigma}
F|_\sigma=\big(\sum_{i=1}^na_ie^{m_i}\big)\big|_\sigma.
\end{equation}
The ring of integral piecewise-exponential functions on $\Sigma$ is denoted $\PE(\Sigma)$. Lastly, define the quotient ring
\[
\uPE(\Sigma)=\frac{\PE(\Sigma)}{\langle e^\ell-1\mid\ell\in\L(\Sigma) \rangle}.
\]
We typically denote elements of $\uPE(\Sigma)$ as equivalence classes $[F]$ with $F\in\PE(\Sigma)$.

\begin{remark}
The above definition differs slightly from the definition of $\PE(\Sigma)$ in our earlier work \cite{CCKR}: in that case, we asked that an integral piecewise-exponential function $F$ be globally expressible as
\[F = \sum_{i=1}^n a_i e^{f_i}\]
where $f_i \in \PL(\Sigma)$.  It is not immediately obvious that the cone-by-cone linear functions $m_i$ in \eqref{eq:Fsigma} patch to give piecewise-linear functions, so our earlier definition is a priori stronger than the one we give here.  However, when $\Sigma$ is unimodular, the two definitions coincide by Lemma~\ref{lem:PEgens} below.  We choose to give the weaker definition in this paper for consistency with other related work such as \cite{AndersonPayne, Brion, BrionVergne}, which in particular suggests that it is the more appropriate definition in the non-unimodular context.
\end{remark}

For each $\rho\in\Sigma(1)$, the {\bf Courant function} $\delta_\rho\in\PL(\Sigma)$ is the unique piecewise-linear function taking value $1$ at $u_\rho$ and vanishing along all other rays of $\Sigma$. Observe that the group $\PL(\Sigma)$ is freely generated as a group by the Courant functions. The next result implies that exponentials of piecewise-linear functions span the ring $\PE(\Sigma)$.

\begin{lemma}
\label{lem:PEgens}
Let $\Sigma$ be a unimodular fan.
The ring $\PE(\Sigma)$ is generated by $\{e^{\pm\delta_\rho}\mid\rho\in\Sigma(1)\}$.
\end{lemma}

\begin{proof}
First, suppose that $\sigma\in\Sigma$ is a cone and $\ell_1,\dots,\ell_j,\ell_1',\dots,\ell_k'\in\L(\sigma)$ are linear functions on $\sigma$ such that we have the following equality of functions on $\sigma$:
\[
e^{\ell_1}+\dots+e^{\ell_j}=e^{\ell_1'}+\dots+e^{\ell_k'};
\]
we claim that $j=k$ and there is an equality of multisets $\{\ell_1,\dots,\ell_j\}=\{\ell_1',\dots,\ell_k'\}$. For if not, then we could choose nonzero $u\in\sigma$ where $\{\ell_1(u),\dots,\ell_j(u)\}\neq\{\ell_1'(u),\dots,\ell_k'(u)\}$, and upon reordering, we could assume that each multiset is listed in nonincreasing order and that $\{\ell_1(u),\dots,\ell_j(u)\}>\{\ell_1'(u),\dots,\ell_k'(u)\}$ in lexicographic ordering. But then
\[
e^{\ell_1(\lambda u)}+\dots+e^{\ell_j(\lambda u)}>e^{\ell_1'(\lambda u)}+\dots+e^{\ell_k'(\lambda u)}\;\;\;\text{ for all }\;\;\;\lambda\gg 0. 
\]

From the previous paragraph, it follows that for each $F\in\PE(\Sigma)$ and each $\sigma\in\Sigma$, there exists a unique ordered pair of multisets of linear functions $\{\ell_{\sigma,1},\dots,\ell_{\sigma,j}\}$ and $\{\ell_{\sigma,1}',\dots,\ell_{\sigma,k}'\}$ in $\L(\sigma)$ 
that are disjoint from each other
and such that
\[
F|_\sigma=\sum_{i=1}^je^{\ell_{\sigma,i}}-\sum_{i=1}^k e^{\ell_{\sigma,i}'}.
\]
Define
\[
F_\sigma=\sum_{i=1}^je^{\sum_{\rho\in\sigma(1)}\ell_{\sigma,i}(u_\rho)\delta_\rho}-\sum_{i=1}^k e^{\sum_{\rho\in\sigma(1)}\ell_{\sigma,i}'(u_\rho)\delta_\rho}.
\]
In other words, $F_\sigma$ agrees with $F$ on $\sigma$ and interpolates (in the unique piecewise-exponential way) to the function that takes constant value $j-k$ on all rays of $\Sigma$ not contained in $\sigma$. Notice that $F_\sigma$ is in the subring generated by $\{e^{\pm\delta_\rho}\mid\rho\in\Sigma(1)\}$. Then an inclusion-exclusion argument shows that 
\[F = \sum_{\sigma\in \Sigma} \mu(\sigma) F_\sigma,\]
where $\mu \colon \Sigma \to \ZZ$ is the unique assignment of integers to the cones of $\Sigma$ that satisfies $\sum_{\sigma \succeq \tau} \mu(\sigma) = 1$ for each $\tau \in \Sigma$.  In particular, $F$ is in the $\ZZ$-linear span of 
$\{F_\sigma \; | \; \sigma\in\Sigma\}$.
\end{proof}

\section{Proofs of Theorems~\ref{thm2} and~\ref{thm1}}

We begin with the following surjectivity result.

\begin{proposition}
\label{cor:invert}
The ring homomorphism
\begin{align*}
\varphi\colon\Z[x_\rho\mid\rho\in\Sigma(1)]&\longrightarrow\uPE(\Sigma)\\
x_\rho &\mapsto 1-[e^{\delta_\rho}]
\end{align*}
is surjective.
\end{proposition}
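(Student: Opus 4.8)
The plan is to reduce surjectivity to a nilpotency statement and then verify that statement by testing $\varphi$ against every homomorphism out of $\uPE(\Sigma)$ into a domain. Since $\PE(\Sigma)$ is generated as a $\Z$-algebra by $\{e^{\pm\delta_\rho}\mid\rho\in\Sigma(1)\}$, the ring $\uPE(\Sigma)$ is generated by the classes $[e^{\pm\delta_\rho}]$. Each $[e^{\delta_\rho}]=1-\varphi(x_\rho)$ already lies in $\im(\varphi)$, so it suffices to show $[e^{-\delta_\rho}]=[e^{\delta_\rho}]^{-1}\in\im(\varphi)$. For this I would prove that $y_\rho:=[e^{\delta_\rho}]-1$ is nilpotent in $\uPE(\Sigma)$: then $[e^{\delta_\rho}]=1+y_\rho$ is a unit whose inverse $\sum_{i\ge 0}(-y_\rho)^i$ is a polynomial in $y_\rho$, hence lies in $\im(\varphi)$.

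To prove $y_\rho$ nilpotent, it is enough to show it lies in every prime ideal of $\uPE(\Sigma)$; equivalently, that every ring homomorphism $\psi$ from $\uPE(\Sigma)$ to a domain $D$ sends $y_\rho$ to $0$. Composing with the quotient $\PE(\Sigma)\twoheadrightarrow\uPE(\Sigma)$ gives $\Psi\colon\PE(\Sigma)\to D$ with $\Psi(e^\ell)=1$ for all $\ell\in\L(\Sigma)$; put $t_\rho:=\Psi(e^{\delta_\rho})\in D^\times$, so the goal becomes $t_\rho=1$ for all $\rho$. Two families of identities in $\PE(\Sigma)$ constrain the $t_\rho$. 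First, if $\rho_1,\dots,\rho_k$ do not lie on a common cone, then $(e^{\delta_{\rho_1}}-1)\cdots(e^{\delta_{\rho_k}}-1)=0$ in $\PE(\Sigma)$: at any point of $|\Sigma|$, lying in some cone $\sigma$, at least one $\rho_i\notin\sigma(1)$, so $\delta_{\rho_i}$ vanishes there. Applying $\Psi$ and using that $D$ is a domain, some $t_{\rho_i}=1$; hence every subset of $S:=\{\rho\mid t_\rho\ne 1\}$ spans a cone of $\Sigma$, so $S=\sigma_S(1)$ for some $\sigma_S\in\Sigma$. Second, for every $m\in M$ one checks that $\sum_\rho\langle m,u_\rho\rangle\,\delta_\rho=m|_{|\Sigma|}$, as both sides are piecewise-linear and agree on all rays, hence on all of $\Sigma$ by simpliciality; therefore $\prod_\rho t_\rho^{\langle m,u_\rho\rangle}=\Psi(e^{m|_{|\Sigma|}})=1$ in $D$.

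These two facts finish the argument. Suppose $S\ne\emptyset$ and choose $\rho_0\in S$. By unimodularity of $\sigma_S$, the vectors $\{u_\rho\mid\rho\in S\}$ extend to a $\Z$-basis of $N$, so there is $m\in M$ with $\langle m,u_{\rho_0}\rangle=1$ and $\langle m,u_\rho\rangle=0$ for $\rho\in S\setminus\{\rho_0\}$. Since $t_\rho=1$ for $\rho\notin S$, the identity $\prod_\rho t_\rho^{\langle m,u_\rho\rangle}=1$ collapses to $t_{\rho_0}=1$, contradicting $\rho_0\in S$. Hence $S=\emptyset$, so $t_\rho=1$ for all $\rho$, whence $\psi(y_\rho)=0$ and $y_\rho\in\sqrt{0}$. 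The crux — and essentially the only non-formal step — is this interplay: the Stanley–Reisner vanishing pins the ``support'' $S$ down to a single face of $\Sigma$, after which unimodularity together with the linear relations is exactly strong enough to force that face to be empty. Everything else in the proof is bookkeeping.
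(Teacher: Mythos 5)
Your proof is correct, and it takes a genuinely different route from the paper's.

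The paper deduces surjectivity from a quantitative nilpotency statement (Proposition~3.3): for \emph{every} $f\in\PL(\Sigma)$ one has $([e^f]-1)^{d+1}=0$, where $d=\dim\Sigma$. This is proved by induction on $d$, using an inclusion--exclusion decomposition $e^f=\sum_\sigma\mu(\sigma)e^{f_\sigma}$ together with pullback lemmas for subfan inclusions (Lemma~3.4) and star fans (Lemma~3.6). You instead prove only that $y_\rho=[e^{\delta_\rho}]-1$ lies in the nilradical, which is all that is needed to invert $1+y_\rho$ by a polynomial; you do this by checking that every homomorphism $\uPE(\Sigma)\to D$ into a domain kills $y_\rho$. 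Your two input identities are exactly right: the Stanley--Reisner-type vanishing $\prod_i(e^{\delta_{\rho_i}}-1)=0$ in $\PE(\Sigma)$ when the $\rho_i$ do not span a cone (so that, in $D$, the support $S=\{\rho:t_\rho\neq 1\}$ must be the ray set of a single cone), and the linear relations $\sum_\rho\langle m,u_\rho\rangle\delta_\rho=m|_{|\Sigma|}$ (so that $\prod_\rho t_\rho^{\langle m,u_\rho\rangle}=1$); unimodularity then lets you pick a dual vector $m$ that isolates any $\rho_0\in S$ and forces $t_{\rho_0}=1$. The note that $t_\rho\in D^\times$ (needed to make sense of negative exponents) is correct since $e^{\delta_\rho}$ is already a unit in $\PE(\Sigma)$. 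The trade-off between the two arguments: your approach is shorter and avoids the star-fan and pullback machinery entirely, at the cost of not producing the explicit nilpotency bound $d+1$, which the paper records as a statement of independent interest (and which also covers general $f$, not just the Courant functions). Both are valid self-contained combinatorial proofs of surjectivity.
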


\begin{remark}
If one prefers an algebraic geometry proof of Proposition~\ref{cor:invert}, it can be derived using the coniveau filtration on $K(X_\Sigma)$; see \cite[Lemma~2.7]{LLPP}.
\end{remark}

\begin{remark}
From an algebro-geometric perspective, it is slightly more natural to consider the homomorphism $x_\rho \mapsto 1- [e^{-\delta_\rho}]$, since the latter corresponds under the isomorphism $\uPE(\Sigma) \cong K(X_\Sigma)$ to the structure sheaf $\O_{D_\rho}$ of the toric divisor $D_\rho$.  This sign convention is inconvenient for the combinatorial arguments to follow, but we note that it differs from our sign choice only by the involution $\mathcal{E} \mapsto \mathcal{E}^{\vee}$ on $K(X_{\Sigma})$.
\end{remark}

\begin{proof}[Proof of Proposition~\ref{cor:invert}]
Since $\uPE(\Sigma)$ is generated as a ring by $\{[e^{\pm\delta_\rho}]\mid\rho\in\Sigma(1)\}$, it suffices to show that $[e^{\pm\delta_\rho}]\in\im(\phi)$ for each $\rho\in\Sigma(1)$, and since $\phi(1-x_\rho)=[e^{\delta_\rho}]$, we really only need to argue that $[e^{-\delta_\rho}]\in\im(\phi)$. To accomplish this, we prove below that $(1-[e^{\delta_\rho}])^{d+1}=0$, from which it follows that
\[
[e^{-\delta_\rho}]=\frac{1}{1-(1-[e^{\delta_\rho}])}=\sum_{i=1}^d(1-[e^{\delta_\rho}])^i=\phi\Big(\sum_{i=1}^dx_\rho^i\Big).
\]
In fact, we prove a stronger nilpotency result in Proposition~\ref{prop:nilpotency} below, whose proof then finishes the proof of Proposition~\ref{cor:invert}.
\end{proof}

\begin{proposition}\label{prop:nilpotency}
If $\Sigma$ is a unimodular fan of dimension $d$ and $f \in \PL(\Sigma)$, then
\[
([e^f] - 1)^{d+1} = 0 \in \uPE(\Sigma).
\]
\end{proposition}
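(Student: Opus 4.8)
The plan is to prove the nilpotency statement by induction on the dimension $d$ of $\Sigma$, reducing the problem to a statement about a single maximal cone and then using the structure of piecewise-linear functions restricted to that cone. First I would reduce to the case where $\Sigma$ consists of a single top-dimensional cone $\sigma$ together with its faces: since $|\Sigma|$ is covered by the maximal cones, a piecewise-exponential function is zero on $|\Sigma|$ if and only if it restricts to zero on each maximal cone, so $\uPE(\Sigma)$ injects into $\prod_{\sigma \text{ maximal}} \uPE(\sigma)$, and it suffices to prove $([e^{f|_\sigma}] - 1)^{d+1} = 0$ in $\uPE(\sigma)$ for each maximal cone $\sigma$. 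Actually, one must be a little careful: the relations we quotient by in $\uPE(\Sigma)$ come from \emph{global} linear functions $\ell \in \L(\Sigma)$, whereas $\uPE(\sigma)$ is a quotient by all linear functions on the span of $\sigma$. But since any linear function on $\langle\sigma\rangle$ extends to $N_\R$ and hence restricts from some $m \in M$, the map $\L(\Sigma) \to \L(\sigma)$ is surjective, so the restriction map $\uPE(\Sigma) \to \uPE(\sigma)$ is well-defined, and nilpotency in each factor implies nilpotency upstairs (taking the maximum exponent, but $d+1$ already bounds each).

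Next I would handle the single-cone case. On a single unimodular cone $\sigma$ with rays $\rho_1, \dots, \rho_k$ (where $k \le d$; if $k < d$ the statement is easier, so assume $k = d$ and $\sigma$ is full-dimensional), the primitive generators $u_{\rho_1}, \dots, u_{\rho_d}$ form a basis of $N$. A piecewise-linear function $f$ on $\sigma$ is \emph{linear} on all of $\sigma$ — there is no interior subdivision — so $f|_\sigma = m|_\sigma$ for a single $m \in M$, namely $m = \sum_i f(u_{\rho_i}) \delta_{\rho_i}$ under the identification with the dual basis. The key computation is then: in $\uPE(\sigma)$, we have $[e^f] = [e^m] = 1$ when $m$ extends globally — but wait, on a single cone every linear function is global, so $[e^f] = 1$ and the statement is trivial! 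So the content must really be in how the \emph{global} relations interact with \emph{local} linearity when $\Sigma$ has more than one cone; a single cone is not the right reduction target. Let me instead keep $\Sigma$ general and argue directly: write $f = \sum_\rho c_\rho \delta_\rho$ with $c_\rho = f(u_\rho) \in \Z$, so $e^f = \prod_\rho e^{c_\rho \delta_\rho} = \prod_\rho (e^{\delta_\rho})^{c_\rho}$, hence $[e^f]$ is a Laurent monomial in the classes $t_\rho := [e^{\delta_\rho}]$. By Proposition~\ref{cor:invert}'s proof strategy, it suffices to show $(t_\rho - 1)^{d+1} = 0$ and $(t_\rho^{-1} - 1)^{d+1} = 0$ for each ray and that products of such nilpotents telescope — more precisely, use that $\uPE(\Sigma)$ is generated by the $t_\rho^{\pm 1}$, and a monomial $\prod t_\rho^{c_\rho}$ minus $1$ lies in the ideal generated by the $(t_\rho - 1)$, so if each $(t_\rho - 1)$ is nilpotent of order $d+1$ it does not immediately follow that the sum is — one needs that the rays involved in $f$ on any single cone number at most $d$.

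This suggests the correct approach: localize the computation to each maximal cone $\sigma$, where $f|_\sigma$ only involves the rays $\rho \in \sigma(1)$, of which there are $\dim \sigma \le d$. On $\sigma$, the element $[e^f] - 1 = \prod_{\rho \in \sigma(1)} t_\rho^{c_\rho} - 1$ expands as a sum of terms each divisible by some $(t_\rho - 1)$ with $\rho \in \sigma(1)$; iterating, $([e^f]-1)^{d+1}$ lies in the ideal generated by products $\prod_{j=1}^{d+1}(t_{\rho_j} \pm\text{-adjustment})$ — and since there are only $\le d$ rays available, by pigeonhole every length-$(d+1)$ product of factors drawn from $\{t_\rho - 1, t_\rho^{-1}-1 : \rho \in \sigma(1)\}$ repeats a ray, so it suffices to show $(t_\rho - 1)(t_\rho^{-1} - 1) = 0$ on $\sigma$, equivalently $(t_\rho - 1)^2$ is a multiple of... hmm, this still needs the single-ray nilpotency. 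The cleanest route: prove that on a single maximal cone, $(t_\rho - 1)^2 = 0$ for each $\rho \in \sigma(1)$ — no wait, that is false in $K$-theory of $\P^1$.

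Let me restate the plan more carefully. The real mechanism should be: for each maximal cone $\sigma$, with $\sigma(1) = \{\rho_1, \dots, \rho_r\}$, $r \le d$, the restricted functions $e^{\delta_{\rho_1}}, \dots, e^{\delta_{\rho_r}}$ generate $\uPE(\sigma)$, and the claim $(\prod_i (t_{\rho_i}^{c_i}) - 1)^{d+1} = 0$ follows from the single fact that $\prod_{i=1}^r (t_{\rho_i} - 1)(t_{\rho_i}^{-1}-1) \cdot (\text{anything}) $ vanishes once we have $r+1 > $ number of distinct rays — i.e., I would prove by induction on $r$ the lemma that in $\uPE(\sigma)$ the product $(t_{\rho_1} - 1)^{e_1} \cdots (t_{\rho_r}-1)^{e_r} = 0$ whenever $e_1 + \cdots + e_r \ge d+1$, using that $t_{\rho_i} = e^{\delta_{\rho_i}}$ and that some $\Z$-linear combination $\sum a_i \delta_{\rho_i}$ is globally linear (because the $u_{\rho_i}$ can be completed to a basis and dual global functions exist). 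I expect the main obstacle to be exactly this: producing, on a maximal cone, enough global linear relations to kill the top product — the subtlety Matt Larson flagged in the earlier draft. I would resolve it by choosing, for the maximal cone $\sigma$ with basis $u_{\rho_1}, \dots, u_{\rho_d}$ of $N$, the dual basis $m_1, \dots, m_d \in M$, so that $\delta_{\rho_i}$ and $m_i$ agree \emph{on $\sigma$} but not globally; then $[e^{\delta_{\rho_i}}] = [e^{m_i} \cdot e^{\delta_{\rho_i} - m_i}]$, and $e^{\delta_{\rho_i} - m_i}$ is a piecewise-exponential function that is $1$ on $\sigma$, whence modulo the global relation $[e^{m_i}] = 1$ we can trade local for global — carefully tracking this exchange, together with the pigeonhole bound $r \le d$, should close the induction and hence finish the proposition.

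\begin{remark}
The preceding is a sketch; making the "trading local for global" step rigorous is the crux, and is presumably where the authors' argument concentrates its effort.
\end{remark}
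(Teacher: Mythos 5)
The central reduction in your plan does not hold: the restriction map $\uPE(\Sigma)\to\prod_{\sigma\text{ maximal}}\uPE(\sigma)$ is well-defined (you verify this correctly), but it is very far from injective, and injectivity is what "nilpotency in each factor implies nilpotency upstairs" requires. Indeed, you notice yourself that every piecewise-linear function on a single cone $\sigma$ is linear, so $\uPE(\sigma)\cong\Z$; if the map into $\Z^{\#\text{max cones}}$ were injective, then $\uPE(\Sigma)$ would be reduced and $[e^f]-1$ itself would already be $0$, which is false in general (and would contradict $\uPE(\Sigma)\cong K(X_\Sigma)$ having nilpotents). You flag this tension ("a single cone is not the right reduction target") but then return to localizing at maximal cones in your second and third attempts; the "trade local for global" sketch at the end also takes place inside a single $\uPE(\sigma)$, which is trivial. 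Exactness of the quotient by $\langle e^\ell-1\rangle$ does not commute with the intersection of restriction kernels over maximal cones, and your proposal never supplies a substitute mechanism.

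The paper's proof goes by induction on $\dim\Sigma$ and never localizes to a single maximal cone. Instead, for each $\sigma\in\Sigma$ one defines $f_\sigma\in\PL(\Sigma)$ by keeping the values of $f$ on the rays of $\sigma$ and zeroing out the rest; an inclusion–exclusion identity $e^f=\sum_\sigma\mu(\sigma)e^{f_\sigma}$ (applied to all multiples $if$) reduces the claim to $([e^{f_\sigma}]-1)^{d+1}=0$. For $\sigma$ of positive dimension, one restricts to the \emph{neighborhood} subfan $\Sigma^{(\sigma)}$, identifies $\uPE(\Sigma^{(\sigma)})\cong\uPE(\Sigma^\sigma)$ with the star fan of strictly smaller dimension (Lemma~\ref{lem:QSigma*}) so the inductive hypothesis applies, lifts via Lemma~\ref{lem:inclusion*} to write $([e^{f_\sigma}]-1)^d=[F]$ with $F$ vanishing on $|\Sigma^{(\sigma)}|$, and finally observes that $e^{f_\sigma}-1$ vanishes on $|\Sigma|\setminus|\Sigma^{(\sigma)}|$, so the product $F\cdot(e^{f_\sigma}-1)$ vanishes everywhere. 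The two ingredients your sketch is missing are precisely the reduction to cone-supported functions $f_\sigma$ via inclusion–exclusion, and the complementary-support argument that converts a vanishing on $\Sigma^{(\sigma)}$ into a vanishing on all of $\Sigma$ after multiplying by one more factor of $[e^{f_\sigma}]-1$.
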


To prove Proposition~\ref{prop:nilpotency}, we use a pullback operation on rings of piecewise-exponential functions. Let $\Sigma$ and $\Sigma'$ be unimodular fans in ambient spaces $N_\R$ and $N'_\R$, and let $\psi: \Sigma \rightarrow \Sigma'$ be a morphism of fans, which is a linear map $\psi\colon N_\R \rightarrow N'_\R$ taking $N$ to $N'$ and taking each cone of $\Sigma$ into a cone of $\Sigma'$. The {\bf pullback homomorphism} induced by $\psi$ is defined by
\begin{align*}
\psi^*: \uPE(\Sigma') &\rightarrow \uPE(\Sigma)\\
[F] &\mapsto [F \circ \psi].
\end{align*}
On generators $[e^f]$ of $\uPE(\Sigma')$, the pullback homomorphism is given by $\psi^*([e^f]) = [e^{f \circ \psi}]$.  Note that $f \circ \psi \in \PL(\Sigma)$ because $\psi$ is a morphism of fans, and from here, the fact that $\psi^*$ is well-defined follows from linearity of $\psi$, which implies that $\ell \circ \psi \in \L(\Sigma)$ whenever $\ell \in \L(\Sigma')$.

We will be particularly interested in two special cases of the pullback homomorphism. The first is when we have an inclusion of a subfan.

\begin{lemma}
\label{lem:inclusion*}
Let $\Sigma$ be a unimodular fan, and let $i: \Sigma' \hookrightarrow \Sigma$ be the inclusion of a subfan. Then $\ker(i^*)=\big\langle [F]\mid F\in\PE(\Sigma)\text{ and }F|_{|\Sigma'|}=0 \big\rangle$.

\end{lemma}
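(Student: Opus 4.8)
The plan is to prove the two containments separately. The inclusion $\supseteq$ is the easy direction: if $F \in \PE(\Sigma)$ with $F|_{|\Sigma'|} = 0$, then $i^*([F]) = [F \circ i] = [F|_{|\Sigma'|}] = [0] = 0$ in $\uPE(\Sigma')$, so $[F] \in \ker(i^*)$; since $\ker(i^*)$ is an ideal, the entire ideal generated by such classes lies in $\ker(i^*)$.

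For the hard direction $\subseteq$, suppose $[G] \in \uPE(\Sigma)$ satisfies $i^*([G]) = 0$, i.e. $G|_{|\Sigma'|}$ lies in the ideal $\langle e^\ell - 1 \mid \ell \in \L(\Sigma')\rangle$ of $\PE(\Sigma')$. Write $G|_{|\Sigma'|} = \sum_j H'_j (e^{\ell_j} - 1)$ with $H'_j \in \PE(\Sigma')$ and $\ell_j \in \L(\Sigma')$. The first step is to lift each $\ell_j$ to an integral linear function $\tilde\ell_j \in \L(\Sigma)$ on all of $\Sigma$ — this is possible because $\L(\Sigma') = M|_{|\Sigma'|}$ is a quotient of $M$, so any $\ell_j = m_j|_{|\Sigma'|}$ extends to $\tilde\ell_j = m_j|_{|\Sigma|}$. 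The second step is to lift each $H'_j \in \PE(\Sigma')$ to some $\tilde H_j \in \PE(\Sigma)$. Since $\PE(\Sigma')$ is generated as a $\Z$-algebra by $\{e^{\pm\delta'_{\rho}} \mid \rho \in \Sigma'(1)\}$ and every ray of $\Sigma'$ is a ray of $\Sigma$, I can lift each generator $e^{\pm\delta'_\rho}$ to $e^{\pm\delta_\rho} \in \PE(\Sigma)$ and extend multiplicatively and additively; concretely, write $H'_j$ as a $\Z$-polynomial in the $e^{\pm\delta'_\rho}$ and apply the same polynomial to the $e^{\pm\delta_\rho}$. Then set $\tilde H := \sum_j \tilde H_j(e^{\tilde\ell_j} - 1) \in \PE(\Sigma)$. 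By construction $\tilde H|_{|\Sigma'|} = G|_{|\Sigma'|}$, and moreover $[\tilde H] = 0$ in $\uPE(\Sigma)$ since $\tilde H$ is manifestly in the ideal $\langle e^\ell - 1 \mid \ell \in \L(\Sigma)\rangle$.

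Now consider $F := G - \tilde H \in \PE(\Sigma)$. Then $F|_{|\Sigma'|} = G|_{|\Sigma'|} - \tilde H|_{|\Sigma'|} = 0$, so $[F]$ belongs to the ideal on the right-hand side of the claimed equality. On the other hand, $[G] = [F] + [\tilde H] = [F] + 0 = [F]$ in $\uPE(\Sigma)$, so $[G]$ lies in that ideal as well. This completes the containment $\ker(i^*) \subseteq \langle [F] \mid F|_{|\Sigma'|} = 0\rangle$.

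The main obstacle is the lifting step: one must be careful that $\PE(\Sigma')$-elements and $\L(\Sigma')$-elements genuinely lift to $\PE(\Sigma)$ and $\L(\Sigma)$ in a way compatible with restriction to $|\Sigma'|$. For the linear functions this is immediate from $\L(\Sigma') = M|_{|\Sigma'|}$. For the piecewise-exponential functions, the key point is that every ray of the subfan $\Sigma'$ is a ray of $\Sigma$, so the Courant-function generators of $\PE(\Sigma')$ are restrictions of Courant-function generators of $\PE(\Sigma)$; hence the natural lift (apply the same integer polynomial in the $e^{\pm\delta_\rho}$) restricts correctly on $|\Sigma'|$. One should also note that the restriction map $\PE(\Sigma) \to \PE(\Sigma')$, $F \mapsto F|_{|\Sigma'|}$, is a ring homomorphism, which makes the bookkeeping in the argument clean. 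No deeper input — such as torsion-freeness or any algebro-geometric fact — is needed.
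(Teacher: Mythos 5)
Your proof is correct and takes essentially the same route as the paper: the paper packages the same two short exact sequences into a commuting diagram and invokes the snake lemma, using surjectivity of $\langle e^\ell-1\mid\ell\in\L(\Sigma)\rangle\to\langle e^{\ell'}-1\mid\ell'\in\L(\Sigma')\rangle$, whereas you unpack that diagram chase by hand (lift the relation for $G|_{|\Sigma'|}$, subtract off the lift, observe the difference vanishes on $|\Sigma'|$). One small merit of your write-up is that you make explicit the surjectivity of the restriction $\PE(\Sigma)\to\PE(\Sigma')$ via the Courant-function generators, a fact the paper uses but leaves tacit.
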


\begin{proof}
By unimodularity of $\Sigma$, any linear function $\ell'$ on $\Sigma'$ can be extended to a linear function $\ell$ on $\Sigma$.  Therefore, in the commuting diagram with short exact rows
\[
\xymatrix@R=6mm@C=6mm{0\ar[r] & \langle e^\ell - 1 \mid \ell\in \L(\Sigma)\rangle \ar[r]\ar[d] & \PE(\Sigma) \ar[r]\ar[d] & \uPE(\Sigma)\ar[r] \ar[d]& 0 \\ 0\ar[r] & \langle e^{\ell'} - 1 \mid \ell' \in \L(\Sigma')\rangle \ar[r] & \PE(\Sigma') \ar[r] & \uPE(\Sigma')\ar[r] & 0, }
\]
the first vertical arrow is surjective.  By the snake lemma, the map
\[
\ker(\PE(\Sigma)\to \PE(\Sigma')) \rightarrow \ker(\uPE(\Sigma)\to \uPE(\Sigma'))
\]
is surjective, proving the lemma.
\end{proof}

The second setting in which we will be interested in studying the pullback homomorphism relates to star fans. Before presenting the lemma, we first set up notation. Given a unimodular fan $\Sigma$ and a cone $\sigma\in\Sigma$, we define the \textbf{neighborhood} of $\sigma$ to be the subfan 
\[
\Sigma^{(\sigma)}=\{\tau\in\Sigma\mid\tau\preceq\sigma'\text{ for some }\sigma'\succeq\sigma\}.
\]
In words, $\Sigma^{(\sigma)}$ collects all cones that contain $\sigma$, along with the faces of those cones. The \textbf{star} of $\sigma$ is the fan in the quotient vector space $N_\R^\sigma=N_\R/\Span(\sigma)$ defined by
\[
\Sigma^\sigma=\{\overline\tau\mid\tau\in\Sigma^{(\sigma)}\},
\]
where $\overline\tau$ denotes the quotient of $\tau$. Note that $\Sigma^\sigma$ is unimodular with respect to the lattice $N^\sigma=N/(N\cap\Span(\sigma))$. The following is the key result regarding star fans.

\begin{lemma}\label{lem:QSigma*}
Let $\Sigma$ be a unimodular fan, $\sigma \in \Sigma$ a cone, and $q\colon \Sigma^{(\sigma)} \rightarrow \Sigma^\sigma$ the quotient map. Then $q^*\colon \uPE(\Sigma^\sigma)\rightarrow \uPE(\Sigma^{(\sigma)})$ is an isomorphism.
\end{lemma}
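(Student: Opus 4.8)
The plan is to show that $q^*$ is an isomorphism by constructing an explicit inverse, or equivalently by showing $q^*$ is both injective and surjective. The key geometric observation is that the quotient map $q\colon N_\R \to N_\R^\sigma = N_\R/\Span(\sigma)$ restricts to a bijection between the rays of $\Sigma^{(\sigma)}$ that are \emph{not} contained in $\sigma$ and the rays of $\Sigma^\sigma$; moreover, for a unimodular fan, $q$ maps each cone $\tau \in \Sigma^{(\sigma)}$ with $\tau \succeq \sigma$ isomorphically (as a lattice cone) onto $\overline{\tau} \in \Sigma^\sigma$, since unimodularity forces $\tau = \sigma + \tau'$ for a complementary unimodular cone $\tau'$. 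So combinatorially the two fans are "the same" once one collapses $\sigma$.

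**For surjectivity**, I would argue at the level of $\PE$: since $\PE(\Sigma^\sigma)$ is generated as a $\Z$-algebra by $\{e^{\pm \delta_{\overline\rho}}\}$ over rays $\overline\rho$ of $\Sigma^\sigma$, and $\PE(\Sigma^{(\sigma)})$ is generated by $\{e^{\pm\delta_\rho}\}$ over rays $\rho$ of $\Sigma^{(\sigma)}$, it suffices to produce, for each ray $\rho \in \Sigma^{(\sigma)}(1)$, a piecewise-linear function on $\Sigma^\sigma$ whose pullback agrees with $\delta_\rho$ modulo $\L(\Sigma^{(\sigma)})$. For rays $\rho \not\subseteq \sigma$, the Courant function $\delta_{\overline\rho}$ pulls back to a function agreeing with $\delta_\rho$ up to a linear function (the discrepancy is linear because both vanish on $\sigma$-adjacent data appropriately); for rays $\rho \subseteq \sigma$, the Courant function $\delta_\rho$ restricted to $\Sigma^{(\sigma)}$ is itself integral linear (it extends to a global linear function precisely because every maximal cone contains $\sigma$), hence $[e^{\delta_\rho}] = 1$ in $\uPE(\Sigma^{(\sigma)})$ and lies trivially in the image. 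This handles surjectivity.

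**For injectivity**, I would build a left inverse. Define a section $s\colon N_\R^\sigma \to N_\R$ of $q$ that is \emph{linear} and maps $N^\sigma$ into $N$ (possible since $N \cap \Span(\sigma)$ is a saturated, hence a direct, summand of $N$ by unimodularity), and which moreover sends each cone of $\Sigma^\sigma$ into the corresponding cone of $\Sigma^{(\sigma)}$. If such an $s$ is a morphism of fans $\Sigma^\sigma \to \Sigma^{(\sigma)}$ — again using that $\tau = \sigma \oplus \tau'$ lets us choose $s$ cone-by-cone consistently, or alternatively choosing $s$ to be the inclusion of a complementary subspace spanned by ray generators — then $s^* \circ q^* = (q \circ s)^* = \mathrm{id}^* = \mathrm{id}$ on $\uPE(\Sigma^\sigma)$, forcing $q^*$ to be injective. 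Combined with surjectivity, $q^*$ is an isomorphism.

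**The main obstacle** I anticipate is the construction of the linear section $s$ that is simultaneously a morphism of fans: one must check that a single linear splitting of $N \to N^\sigma$ can be chosen so that \emph{every} cone $\overline\tau$ of the star lands inside $\tau$ (not merely inside $\Sigma^{(\sigma)}$), and it is not obvious a priori that one splitting works for all cones at once rather than needing different complements for different cones. The resolution should come from unimodularity: since $\sigma$ is a unimodular cone, its ray generators extend to a lattice basis, and any cone $\tau \succeq \sigma$ in the unimodular fan has the form $\mathrm{Cone}(u_{\rho_1}, \dots, u_{\rho_k})$ with the generators of $\sigma$ among them; a fixed complement to $\Span(\sigma)$ spanned by an appropriate sublattice of $N$ may fail to lie inside $\tau$, so one likely instead defines $s$ by $s(\overline{u_\rho}) = u_\rho$ on the (well-defined) non-$\sigma$ rays and checks linearity and the cone condition are consistent across overlapping cones — here the unimodular direct-sum decomposition $\tau = \sigma \oplus \tau'$ is exactly what makes the definition coherent. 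Alternatively, one can bypass the explicit $s$ entirely and prove injectivity by combining Lemma~\ref{lem:inclusion*} with a dimension/support argument, but the section approach is cleaner if the fan-morphism check goes through.
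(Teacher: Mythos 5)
Your proposal has genuine gaps in both the surjectivity and injectivity steps, and in each case the failure is caused by the same misconception: that collapsing $\sigma$ behaves like a global linear direct-sum decomposition of the whole fan, when in fact the complement of $\sigma$ inside each $\tau\succeq\sigma$ varies from cone to cone.

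For surjectivity, the claim that $\delta_\rho$ restricted to $\Sigma^{(\sigma)}$ is \emph{linear} when $\rho\in\sigma(1)$ (``because every maximal cone contains $\sigma$'') is false. Take $N=\Z^2$, $\Sigma$ with rays $\rho_1=\langle(1,0)\rangle$, $\rho_2=\langle(0,1)\rangle$, $\rho_3=\langle(-1,1)\rangle$ and maximal cones $\mathrm{Cone}(\rho_1,\rho_2)$, $\mathrm{Cone}(\rho_2,\rho_3)$; set $\sigma=\rho_2$, so $\Sigma^{(\sigma)}=\Sigma$. Then $\delta_{\rho_2}$ is the function $y$ on the first cone and $x+y$ on the second, which is not linear, and $[e^{\delta_{\rho_2}}]\neq 1$ in $\uPE(\Sigma^{(\sigma)})$. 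The conclusion you want is still true (in this example $\delta_{\rho_2}=y-\delta_{\rho_3}$, so $[e^{\delta_{\rho_2}}]=[e^{-\delta_{\rho_3}}]$ does lie in the image of $q^*$), but the correct justification is that by unimodularity of $\sigma$ one can choose $m\in M$ with $\langle m,u_{\rho_0}\rangle=1$ and $\langle m,u_\rho\rangle=0$ for all other $\rho\in\sigma(1)$, and then $\delta_{\rho_0}=m|_{|\Sigma^{(\sigma)}|}-\sum_{\rho\notin\sigma(1)}\langle m,u_\rho\rangle\delta_\rho$, not that $\delta_{\rho_0}$ itself is linear.

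For injectivity, the linear section $s$ you propose, determined by $s(\overline{u_\rho})=u_\rho$ on non-$\sigma$ rays, is not a well-defined linear map in general: the $\overline{u_\rho}$ span $N^\sigma_\R$ but may satisfy linear relations that the $u_\rho$ do not satisfy in $N_\R$. In the example above, $\overline{u_{\rho_1}}+\overline{u_{\rho_3}}=0$ in $N^\sigma$ but $u_{\rho_1}+u_{\rho_3}=(0,1)\neq 0$ in $N$, so no such $s$ exists. The unimodular decomposition $\tau=\sigma\oplus\tau'$ does not rescue this, because the complementary $\tau'$ changes from cone to cone and cannot be realized as the image of a single linear splitting. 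The paper's proof avoids any global section: given $[e^f]$ with $f\in\PL(\Sigma^{(\sigma)})$, it subtracts a linear function $\ell_{f,\sigma}$ agreeing with $f$ on $\sigma$ (this is a choice made \emph{per element}, not per cone), notes that $f-\ell_{f,\sigma}$ vanishes on $\Span(\sigma)$ and hence descends to $\overline{f-\ell_{f,\sigma}}\in\PL(\Sigma^\sigma)$, and checks that $[e^f]\mapsto[e^{\overline{f-\ell_{f,\sigma}}}]$ is a well-defined two-sided inverse to $q^*$. You should adopt this strategy; it sidesteps both obstructions entirely.
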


\begin{proof}
For any $f \in \PL(\Sigma^{(\sigma)})$, choose an integral linear function $\ell_{f, \sigma} \in \L(\Sigma^{(\sigma)})$ that agrees with $f$ on $\sigma$.  Then 
\[
[e^f] = [e^{f-\ell_{f,\sigma}}] \in \uPE(\Sigma^{(\sigma)}).
\]
Since $f - \ell_{f,\sigma}$ vanishes on $\Span(\sigma)$, it gives a well-defined function 
$\overline{f - \ell_{f,\sigma}} \in \PL(\Sigma^\sigma)$ for which
\[
f - \ell_{f,\sigma} = \overline{f - \ell_{f,\sigma}} \circ q.
\]
If $\ell_{f,\sigma},\ell_{f,\sigma}'\in L(\Sigma^{(\sigma)})$ both agree with $f$ on $\sigma$, then $\overline{f - \ell_{f,\sigma}}-\overline{f - \ell'_{f,\sigma}}=\overline{\ell'_{f,\sigma}-\ell_{f,\sigma}}$ is linear on $\Sigma^\sigma$, and it follows that we obtain a well-defined inverse to $q^*$:
\begin{align*}
\uPE(\Sigma^{(\sigma)}) &\rightarrow \uPE(\Sigma^\sigma)\\
[e^f] &\mapsto \big[e^{\overline{f - \ell_{f,\sigma}}}\big]\qedhere
\end{align*}
\end{proof}

With these lemmas in place, we are now prepared to prove Proposition~\ref{prop:nilpotency}.

\begin{proof}[Proof of Proposition~\ref{prop:nilpotency}]
We prove that $\big([e^f] - 1\big)^{d+1}=0$ for all $f\in\PL(\Sigma)$ by induction on the dimension $d$ of $\Sigma$. The base case is $d=0$, in which case every piecewise-linear function $f$ on $\Sigma$ is the zero function, so $e^f = e^0 = 1$ and the result is immediate.

Suppose, by induction, that the result holds for all unimodular fans of dimension $d' < d$, and let $\Sigma$ be a unimodular fan of dimension $d$.  For any $f \in \PL(\Sigma)$ and $\sigma\in\Sigma$, define $f_\sigma\in\PL(\Sigma)$ on ray generators by
\[
f_\sigma(u_\rho) = \begin{cases} f(u_\rho) & \text{ if } \rho \in \sigma(1)\\ 0 & \text{ otherwise.} \end{cases}
\]
Inclusion-exclusion produces a function $\mu:\Sigma\rightarrow\Z$ such that, for every $f\in\PL(\Sigma)$, we have
\[
e^f = \sum_{\sigma \in \Sigma} \mu(\sigma) e^{f_\sigma}.
\]
\noindent Applying this equality to multiples $if$ of $f$, we compute
\begin{align*}
([e^f] - 1)^{d+1} &= \sum_{i=0}^{d+1} \binom{d+1}{i} (-1)^{d+1-i} [e^{if}]\\
&=\sum_{\sigma \in \Sigma} \mu(\sigma) \bigg(\sum_{i=0}^{d+1} \binom{d+1}{i} (-1)^{d+1-i} [e^{if_\sigma}]\bigg)\\
&=\sum_{\sigma \in \Sigma} \mu(\sigma) ([e^{f_\sigma}] - 1)^{d+1}.
\end{align*}
Thus, in order to prove the proposition, it is sufficient to prove that
\begin{equation}
    \label{eq:nilgoalsigma}
    ([e^{f_\sigma}] - 1)^{d+1} = 0 \in \uPE(\Sigma)
\end{equation}
for every $f\in\PL(\Sigma)$ and $\sigma \in \Sigma$.  This holds when $\sigma$ is the zero-dimensional cone---since then $f_\sigma = 0$ and hence $e^{f_\sigma} = e^0 = 1$---so we assume in what follows that $\sigma$ has positive dimension.

Given that $\uPE(\Sigma^{(\sigma)}) \cong \uPE(\Sigma^\sigma)$ by Lemma~\ref{lem:QSigma*}, the fact that $\text{dim}(\Sigma^\sigma) < d$ and the inductive hypothesis imply that
\[
i^*([e^{f_\sigma}] - 1)^d =0 \in \uPE(\Sigma^{(\sigma)}).
\]
From here, Lemma~\ref{lem:inclusion*} implies that we can write
\[
([e^{f_\sigma}] - 1)^d = [F]
\]
for a function $F\in\PL(\Sigma)$ that vanishes on $|\Sigma^{(\sigma)}|$.  Therefore,
\[
([e^{f_\sigma}] - 1)^{d+1} = [F]\cdot ([e^{f_\sigma}] - 1)=[F\cdot(e^{f_\sigma}-1)]\in\uPE(\Sigma).
\]
Notice, here, that $e^{f_{\sigma}}-1$ vanishes on $|\Sigma| \setminus |\Sigma^{(\sigma)}|$, since $f_\sigma$ vanishes on this domain.  Thus, we have a product of a function $F$ that vanishes on $|\Sigma^{(\sigma)}|$ and a function $e^{f_{\sigma}}-1$ that vanishes on the complement of $|\Sigma^{(\sigma)}|$, so their product vanishes on all of $\Sigma$.  That is,
\[
([e^{f_\sigma}] - 1)^{d+1}=0\in\uPE(\Sigma),
\]
completing the induction step and thus the proof of the proposition.
\end{proof}

Having now proven that $\varphi$ is surjective, it remains to  argue that 
\[
\ker(\phi)=\I+\J,
\] where $\I$, and $\J$ are as in Theorem~\ref{thm1}. To begin, we lift the ring homomorphism $\varphi$ to a ring homomorphism $\widetilde\phi:\Z[x_\rho\mid\rho\in\Sigma(1)]\rightarrow\PE(\Sigma)$.

\begin{proposition}\label{prop:kerneltilde}
    Let $\widetilde\phi:\Z[x_\rho\mid\rho\in\Sigma(1)]\rightarrow\PE(\Sigma)$ be the ring homomorphism determined by $\widetilde\phi(x_\rho)=1-e^{\delta_\rho}$. Then $\ker(\widetilde\phi)=\I$.
\end{proposition}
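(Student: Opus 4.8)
The plan is to prove the two inclusions $\I\subseteq\ker(\widetilde\phi)$ and $\ker(\widetilde\phi)\subseteq\I$ separately; the second is where all the content lies.

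For $\I\subseteq\ker(\widetilde\phi)$, I would use that $\PE(\Sigma)$ consists of genuine $\R$-valued functions on $|\Sigma|$, so it suffices to check that each generator $x_{\rho_1}\cdots x_{\rho_k}$ of $\I$ is sent to the zero function. A point $v\in|\Sigma|$ lies in some cone $\sigma\in\Sigma$; since $\{\rho_1,\dots,\rho_k\}$ does not lie on a common cone, some $\rho_i\notin\sigma(1)$, and then $\delta_{\rho_i}|_\sigma=0$, because $\delta_{\rho_i}$ is linear on $\sigma$ and vanishes at every ray generator of $\sigma$, which span $\Span(\sigma)$ by simpliciality. Hence $\widetilde\phi(x_{\rho_i})(v)=1-e^{0}=0$ and so $\widetilde\phi(x_{\rho_1}\cdots x_{\rho_k})(v)=0$.

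For $\ker(\widetilde\phi)\subseteq\I$, the first inclusion lets $\widetilde\phi$ descend to a $\Z$-algebra map $\overline{\phi}\colon \Z[x_\rho\mid\rho\in\Sigma(1)]/\I\to\PE(\Sigma)$ from the Stanley--Reisner ring, and the task is to show $\overline\phi$ is injective. I would invoke the standard $\Z$-module basis of the Stanley--Reisner ring: the monomials $x^a=\prod_\rho x_\rho^{a_\rho}$ whose support $\mathrm{supp}(a)=\{\rho\mid a_\rho\ge 1\}$ consists of rays lying on a common cone of $\Sigma$. So suppose $\sum_a c_a x^a\in\ker(\overline\phi)$, that is, $\sum_a c_a\prod_\rho(1-e^{\delta_\rho})^{a_\rho}=0$ as a function on $|\Sigma|$, the sum running over such $a$. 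Fix a maximal cone $\sigma$ and restrict this identity to $\sigma$: every term with $\mathrm{supp}(a)\not\subseteq\sigma(1)$ vanishes on $\sigma$, since $\delta_\rho|_\sigma=0$ for $\rho\notin\sigma(1)$ as above, so we are left with $\sum_{a\,:\,\mathrm{supp}(a)\subseteq\sigma(1)} c_a\prod_{\rho\in\sigma(1)}(1-e^{\delta_\rho})^{a_\rho}=0$ on $\sigma$. By unimodularity, $\{u_\rho\mid\rho\in\sigma(1)\}$ extends to a $\Z$-basis of $N$, so the linear functions $\delta_\rho|_\sigma$, $\rho\in\sigma(1)$, are independent coordinates on $\Span(\sigma)$; consequently the exponentials $e^{\sum_{\rho\in\sigma(1)}b_\rho\delta_\rho}$ for distinct tuples $b\in\Z_{\ge 0}^{\sigma(1)}$ are linearly independent as functions on the open cone $\mathrm{int}(\sigma)$, by the classical linear independence of exponential functions with distinct exponents. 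Hence the subring of functions on $\sigma$ generated by the $e^{\delta_\rho}$, $\rho\in\sigma(1)$, is a polynomial ring, and the substitution $x_\rho\mapsto 1-e^{\delta_\rho}$ defines an injection of $\Z[x_\rho\mid\rho\in\sigma(1)]$ into the ring of functions on $\sigma$. Applying this injectivity forces $c_a=0$ for every $a$ with $\mathrm{supp}(a)\subseteq\sigma(1)$. Finally, every $a$ in the Stanley--Reisner basis has $\mathrm{supp}(a)$ lying on a common cone, hence on some maximal cone $\sigma$; ranging over all maximal cones gives $c_a=0$ for all $a$, so $\overline\phi$ is injective.

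I expect the crux to be the local algebraic-independence step---that on a single maximal cone $\sigma$ the functions $1-e^{\delta_\rho}$, $\rho\in\sigma(1)$, are algebraically independent over $\Z$. This is precisely where unimodularity is needed, guaranteeing that the $\delta_\rho|_\sigma$ are independent linear coordinates so that the classical linear independence of exponential functions can be brought to bear; everything else is bookkeeping that matches the monomials surviving restriction to $\sigma$ against the Stanley--Reisner basis. Equivalently, one could instead show directly that $\ker(\widetilde\phi)=\bigcap_{\sigma\text{ maximal}}\langle x_\rho\mid\rho\notin\sigma(1)\rangle$, using the embedding of $\PE(\Sigma)$ into the product of the function rings on the maximal cones, and then recognize the right-hand side as the primary decomposition of the Stanley--Reisner ideal $\I$.
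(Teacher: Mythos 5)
Your proof is correct, and it takes a recognizable but somewhat cleaner route than the paper's for the key injectivity step. For the inclusion $\I\subseteq\ker(\widetilde\phi)$, the paper expands $\prod_i(1-e^{\delta_{\rho_i}})$ as an alternating sum over subsets $I$ and shows cancellation in pairs at each point $w$, whereas you observe directly that one factor $1-e^{\delta_{\rho_i}}$ already vanishes identically on the cone containing $v$; your version is a bit more economical. For the injectivity of $\overline\phi$, both arguments proceed by restricting a putative $\Z$-linear dependence to a cone $\sigma$ and exploiting independence of exponentials of the $\delta_\rho|_\sigma$. The paper fixes a lex-maximal tuple $\widehat a$, specializes to a carefully chosen point $u\in\sigma_{\widehat a}$ with rapidly decreasing coordinates, and runs an asymptotic growth argument on $n\cdot u$ to isolate the single coefficient $\lambda_{\widehat a}$; this effectively re-proves, in the specific case at hand, the independence fact it needs. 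You instead range over all maximal cones $\sigma$ simultaneously and invoke the classical linear (hence algebraic) independence of exponentials of distinct linear functions, which unimodularity guarantees applies because the $\delta_\rho|_\sigma$, $\rho\in\sigma(1)$, are independent linear coordinates on $\Span(\sigma)$. Your argument then kills all coefficients $c_a$ with $\operatorname{supp}(a)\subseteq\sigma(1)$ at once. The trade-off is that your proof leans on the classical independence of exponentials as a black box, while the paper's is self-contained at the cost of the lex-order bookkeeping and growth estimate; both are valid, and your closing remark that one could phrase the whole thing as identifying $\ker(\widetilde\phi)=\bigcap_\sigma\langle x_\rho\mid\rho\notin\sigma(1)\rangle$ is exactly the primary decomposition of $\I$ that the paper later uses in the proof of Proposition~\ref{prop:unlocalize}.
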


\begin{proof}
First, we prove that $\I\subseteq\ker(\widetilde\phi)$. Suppose that $\rho_1, \ldots, \rho_k \in \Sigma(1)$ do not span a cone of $\Sigma$. Then
\[ 
\widetilde\phi\left(x_{\rho_1} \cdots x_{\rho_k}\right) = \prod_{i=1}^k(1 - e^{\delta_{\rho_i}} )=\sum_{I \subseteq \{\rho_1, \ldots, \rho_k\}} (-1)^{|I|} e^{\sum_{\rho \in I} \delta_\rho}
.\]
This is the function $|\Sigma| \rightarrow \R$ that sends each $w \in |\Sigma|$ to 
\begin{equation}
    \label{eq:wantiszero}
    \sum_{I \subseteq \{\rho_1, \ldots, \rho_k\}} (-1)^{|I|} e^{\sum_{\rho\in I \cap \sigma_w(1)}\delta_\rho(w)},
\end{equation}
where $\sigma_w\in\Sigma$ is the unique cone containing $w$ in its relative interior. The fact that $\rho_1, \ldots, \rho_k$ do not span a cone implies that there exists some element in $\{\rho_1, \ldots, \rho_k\} \setminus \sigma_w(1)$, which we assume without loss of generality is $\rho_k$.  For any subset $I \subseteq \{\rho_1, \ldots, \rho_{k-1}\}$, we have
\[
I \cap \sigma_w(1) = (I \cup \{\rho_k\}) \cap \sigma_w(1),
\]
and the summands in \eqref{eq:wantiszero} indexed by $I$ and by $I \cup \{\rho_k\}$ appear in \eqref{eq:wantiszero} with opposite signs.  It follows that \eqref{eq:wantiszero} equals zero, showing that $\I\subseteq\ker(\widetilde\phi)$.

In light of the inclusion $\I\subseteq\ker(\widetilde\phi)$, there is a well-defined homomorphism
\[
\overline{\phi}: \frac{\Z[x_\rho \; | \; \rho \in \Sigma(1)]}{\I} \rightarrow \PE(\Sigma).
\]
To complete the proof of the proposition, we argue that $\overline\phi$ is injective. Note that the domain of $\overline{\phi}$ is the Stanley--Reisner ring of $\Sigma$, a $\Z$-basis of which is given by the monomials of the form
\[
\prod_{\rho\in\sigma(1)}[x_\rho]^{a_\rho}
\]
as $\sigma$ ranges over all cones of $\Sigma$ and $a_\rho$ over all positive integers.  Thus, to prove injectivity of $\overline{\phi}_\Sigma$, it suffices to prove that no nontrivial $\Z$-linear combination of these cone monomials is mapped to zero, which is equivalent to the claim that the elements
\[
\prod_{\rho \in \sigma(1)} (1 - e^{\delta_\rho})^{a_\rho}
\]
are $\Z$-linearly independent in $\PE(\Sigma)$.

Toward a contradiction, suppose that we have a nontrivial linear dependence
\begin{equation}\label{eq:dependence}
0=\sum_{a\in S} \lambda_{a}\prod_{\rho \in \Sigma(1)} (1 - e^{\delta_\rho})^{a_\rho}
\end{equation}
where $S$ is a nonempty finite set of tuples $a\in\Z_{\geq 0}^{\Sigma(1)}$, each of which has nonzero entries supported on indices corresponding to the rays of some cone $\sigma_a\in\Sigma$, and where each $\lambda_a$ is a nonzero integer. Choose an ordering of the rays $\Sigma(1)=\{\rho_1,\dots,\rho_m\}$, order the tuples $a\in S$ lexicographically with respect to this ray ordering, and let $\widehat a$ be the maximal element of $S$. Suppose that $\sigma_{\widehat a}\in\Sigma(k)$, and, without loss of generality, assume that the first $k$ rays $\rho_1,\dots,\rho_k\in\Sigma(1)$ are the rays of $\sigma_{\widehat a}$. (To do so, move rays not in $\sigma_{\widehat a}(1)$ to the end of the ordering, without changing the ordering of the rays in $\sigma_{\widehat a}(1)$, which does not affect the maximality of $\widehat a$.) We now show that \eqref{eq:dependence} implies that $\lambda_{\widehat a}=0$, a contradiction.

Consider positive values $\epsilon_{\rho_1},\dots,\epsilon_{\rho_k}>0$, and evaluate the right-hand side of \eqref{eq:dependence} at 
\[
u=\epsilon_{\rho_1}u_{\rho_1}+\epsilon_{\rho_2}u_{\rho_2}+\dots+\epsilon_{\rho_k}u_{\rho_k}\in\sigma_{\widehat a}.
\]
The term in the expansion of the $a$th summand of \eqref{eq:dependence} with the largest power of $e$ is
\[
\lambda_a(-1)^{\sum a_\rho}e^{\sum\epsilon_\rho a_\rho},
\]
where both sums appearing in exponents are over $\rho\in\sigma_{a}(1)\cap\sigma_{\widehat a}(1)$. As long as we choose $\epsilon_{\rho_j},\dots,\epsilon_{\rho_k}$ to be sufficiently small with respect to $\epsilon_{\rho_1},\dots,\epsilon_{\rho_{j-1}}$ for every $j=2,\dots,k$, the maximality of $\widehat a$ implies that the unique maximal power of $e$ appearing in all of these summands is the one associated to $\widehat a$:
\begin{equation}\label{eq:maximalterm}
\lambda_{\widehat{a}}(-1)^{\sum \widehat a_\rho}e^{\sum\epsilon_\rho\widehat a_\rho}.
\end{equation}
Thus, evaluating the right-hand side of \eqref{eq:dependence} at arbitrarily large multiples of $u$, the maximal term \eqref{eq:maximalterm} outgrows all others, from which \eqref{eq:dependence} implies that $\lambda_{\widehat a}=0$, as desired.
\end{proof}

The homomorphism $\widetilde\phi$ considered above is not surjective, simply because $e^{-\delta_\rho}$ is not in the image. However, if we localize the domain at the element 
\[
t=\prod_{\rho\in\Sigma(1)}(1-x_\rho),
\]
noting that $\widetilde\phi(t)$ is invertible in $\PE(\Sigma)$, we obtain the following extension of Proposition~\ref{prop:kerneltilde}.

\begin{corollary}\label{cor:surj1}
    The ring homomorphism $\widetilde\phi_t:\Z[x_\rho\mid\rho\in\Sigma(1)]_t\rightarrow\PE(\Sigma)$ determined by $\widetilde\phi_t(x_\rho)=1-e^{\delta_\rho}$ is surjective with $\ker(\widetilde\phi_t)=\I_t$.
\end{corollary}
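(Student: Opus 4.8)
The plan is to deduce Corollary~\ref{cor:surj1} almost immediately from Proposition~\ref{prop:kerneltilde} by a routine localization argument, the only substantive point being surjectivity. First I would note that localization is exact, so from the presentation $\Z[x_\rho]/\I \hookrightarrow \PE(\Sigma)$ furnished by Proposition~\ref{prop:kerneltilde} we get, upon inverting $t$ on the domain and $\widetilde\phi(t) = \prod_\rho(1-e^{\delta_\rho}) = \prod_\rho e^{\delta_\rho}$ on the target, a still-injective map
\[
\frac{\Z[x_\rho\mid\rho\in\Sigma(1)]_t}{\I_t} = \left(\frac{\Z[x_\rho\mid\rho\in\Sigma(1)]}{\I}\right)_{\!t} \hookrightarrow \PE(\Sigma)\big[\widetilde\phi(t)^{-1}\big] = \PE(\Sigma),
\]
where the last equality holds because $\widetilde\phi(t)$ is already a unit in $\PE(\Sigma)$ (it is $e^{\sum_\rho \delta_\rho}$, with inverse $e^{-\sum_\rho\delta_\rho}$), so localizing at it changes nothing. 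This simultaneously gives $\ker(\widetilde\phi_t) = \I_t$ and identifies the target of $\widetilde\phi_t$ with $\PE(\Sigma)$ itself.

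For surjectivity, the key observation is that once $t$ is inverted, each $1-x_\rho$ becomes a unit in the domain, so $(1-x_\rho)^{-1}$ maps to $e^{-\delta_\rho}$; hence the image of $\widetilde\phi_t$ contains both $e^{\delta_\rho}$ and $e^{-\delta_\rho}$ for every $\rho\in\Sigma(1)$. Since $\PE(\Sigma)$ is generated as a $\Z$-algebra by $\{e^{\pm\delta_\rho}\mid\rho\in\Sigma(1)\}$ (as recalled in Section~\ref{sec:prelim}), it follows that $\widetilde\phi_t$ is surjective. Combining this with the injectivity statement above completes the proof.

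I do not expect a genuine obstacle here; this corollary is a formal consequence of the preceding proposition, and the only things to check are the harmless observations that $\widetilde\phi(t)$ is a unit in $\PE(\Sigma)$ and that the $e^{-\delta_\rho}$ now lie in the image. The real work was already done in Proposition~\ref{prop:kerneltilde} (the $\Z$-linear independence argument) and in Proposition~\ref{prop:nilpotency}; this corollary simply repackages that work in localized form, setting up the comparison with the Vezzosi--Vistoli presentation~\eqref{eq:vv} and the subsequent ``unlocalization'' step (Proposition~\ref{prop:unlocalize}) that will pin down $\ker(\varphi) = \I+\J$.
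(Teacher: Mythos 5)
Your argument is correct and matches the paper's proof essentially verbatim: both localize the exact sequence $0\to\I\to\Z[x_\rho]\to\PE(\Sigma)$ at $t$ to get the kernel, and both note that $1-x_\rho$ becomes a unit mapping to $e^{\delta_\rho}$, so the algebra generators $e^{\pm\delta_\rho}$ are all hit. One small typo: since $t=\prod_\rho(1-x_\rho)$ and $\widetilde\phi(1-x_\rho)=e^{\delta_\rho}$, you should have written $\widetilde\phi(t)=\prod_\rho e^{\delta_\rho}$ directly rather than $\prod_\rho(1-e^{\delta_\rho})$, which is $\widetilde\phi\bigl(\prod_\rho x_\rho\bigr)$, not $\widetilde\phi(t)$; your stated conclusion $\widetilde\phi(t)=e^{\sum_\rho\delta_\rho}$ is nonetheless correct and the argument goes through unchanged.
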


\begin{proof}
    Notice that $1-x_\rho$ is invertible in $\Z[x_\rho \; | \; \rho\in\Sigma(1)]_t$ and $\widetilde\phi_t((1-x_\rho)^{\pm 1})=e^{\pm\delta_\rho}$. Thus, all of the generators of $\PE(\Sigma)$ are in the image of $\widetilde\phi_t$, so $\widetilde\phi_t$ is surjective. The computation of the kernel then follows from localizing the exact sequence obtained from Proposition~\ref{prop:kerneltilde}:
    \[
    0\rightarrow\I\rightarrow\Z[x_\rho\mid\rho\in\Sigma(1)]\stackrel{\widetilde\phi}{\rightarrow}\PE(\Sigma).\qedhere
    \]
\end{proof}

From the previous result, we then obtain the following approximation of Theorem~\ref{thm1}.

\begin{corollary}\label{cor:surj2}
    The ring homomorphism $\phi_t:\Z[x_\rho \; | \; \rho\in\Sigma(1)]_t\rightarrow\uPE(\Sigma)$ determined by $\phi_t(x_\rho)=1-e^{\delta_\rho}$ is surjective with $\ker(\phi_t)=\I_t+\J_t$.
\end{corollary}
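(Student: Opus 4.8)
The plan is to factor $\phi_t$ as $\pi\circ\widetilde\phi_t$, where $\widetilde\phi_t$ is the homomorphism of Corollary~\ref{cor:surj1} and $\pi\colon\PE(\Sigma)\to\uPE(\Sigma)$ is the canonical quotient map, and then to pin down the preimage under $\widetilde\phi_t$ of the defining relation ideal
\[
\cK=\big\langle e^\ell-1\mid\ell\in\L(\Sigma)\big\rangle\subseteq\PE(\Sigma),
\]
which is exactly $\ker(\pi)$. Surjectivity of $\phi_t$ is then immediate, since both $\widetilde\phi_t$ (by Corollary~\ref{cor:surj1}) and $\pi$ are surjective. For the kernel, I would establish the identity $\widetilde\phi_t(\J_t)=\cK$; granting this, the elementary fact that $h^{-1}(h(J))=J+\ker(h)$ for a ring homomorphism $h$ and an ideal $J$, applied to $\widetilde\phi_t$ together with $\ker(\widetilde\phi_t)=\I_t$ from Corollary~\ref{cor:surj1}, gives
\[
\ker(\phi_t)=\widetilde\phi_t^{-1}(\cK)=\widetilde\phi_t^{-1}\big(\widetilde\phi_t(\J_t)\big)=\J_t+\ker(\widetilde\phi_t)=\I_t+\J_t,
\]
as desired.

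The crux is therefore the equality $\widetilde\phi_t(\J_t)=\cK$, which comes down to one computation. Given $m\in M$, set $f_m^{+}=\sum_{\langle m,u_\rho\rangle>0}\langle m,u_\rho\rangle\,\delta_\rho$ and $f_m^{-}=\sum_{\langle m,u_\rho\rangle<0}(-\langle m,u_\rho\rangle)\,\delta_\rho$ in $\PL(\Sigma)$. Because each $\delta_\rho$ vanishes on every ray but $\rho$ and the ray generators of a unimodular cone form a lattice basis, evaluating on rays shows that $f_m^{+}-f_m^{-}=\sum_{\rho}\langle m,u_\rho\rangle\,\delta_\rho$ is the genuinely linear function $m|_{|\Sigma|}\in\L(\Sigma)$. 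Since $\widetilde\phi_t(1-x_\rho)=e^{\delta_\rho}$, applying $\widetilde\phi_t$ to the generator $g_m=\prod_{\langle m,u_\rho\rangle>0}(1-x_\rho)^{\langle m,u_\rho\rangle}-\prod_{\langle m,u_\rho\rangle<0}(1-x_\rho)^{-\langle m,u_\rho\rangle}$ of $\J$ yields
\[
\widetilde\phi_t(g_m)=e^{f_m^{+}}-e^{f_m^{-}}=e^{f_m^{-}}\big(e^{\,m|_{|\Sigma|}}-1\big)\in\cK.
\]
As $\J_t$ is generated by the $g_m$ and $\cK$ is an ideal, this gives $\widetilde\phi_t(\J_t)\subseteq\cK$. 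For the reverse inclusion, note that in the localization $1-x_\rho$ is a unit with $\widetilde\phi_t\big((1-x_\rho)^{-1}\big)=e^{-\delta_\rho}$, so dividing the displayed identity by $e^{f_m^{-}}$ gives $e^{\,m|_{|\Sigma|}}-1=\widetilde\phi_t\big(E_m\,g_m\big)$ with $E_m=\prod_{\langle m,u_\rho\rangle<0}(1-x_\rho)^{\langle m,u_\rho\rangle}\in\Z[x_\rho\mid\rho\in\Sigma(1)]_t$. Since every element of $\L(\Sigma)$ has the form $m|_{|\Sigma|}$ for some $m\in M$, and since $\widetilde\phi_t(\J_t)$ is an ideal of $\PE(\Sigma)$ by surjectivity of $\widetilde\phi_t$, this shows $\cK\subseteq\widetilde\phi_t(\J_t)$, hence $\widetilde\phi_t(\J_t)=\cK$ and the argument is complete.

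I do not anticipate a genuine obstacle here; the content of the corollary is largely formal once Corollary~\ref{cor:surj1} is in hand. The only points that require care are bookkeeping ones: keeping straight which factors $1-x_\rho$ carry negative exponents after localizing, and the elementary verification that $\sum_\rho\langle m,u_\rho\rangle\delta_\rho$ restricts to the linear function $m$ on $|\Sigma|$. If anything is subtle, it is simply making sure the divisions performed to produce $E_m$ genuinely take place inside the localization $\Z[x_\rho\mid\rho\in\Sigma(1)]_t$ rather than some larger ring.
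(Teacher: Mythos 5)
Your proposal is correct and follows essentially the same route as the paper's own proof. The paper argues tersely: Corollary~\ref{cor:surj1} gives an isomorphism $\Z[x_\rho]_t/\I_t\cong\PE(\Sigma)$ induced by $\widetilde\phi_t$, and ``this isomorphism identifies $\J_t$ with $\langle e^\ell-1\mid\ell\in\L(\Sigma)\rangle$''; the corollary then follows by taking quotients. What you've written out --- the computation that $\sum_\rho\langle m,u_\rho\rangle\delta_\rho=m|_{|\Sigma|}$, the factorization $e^{f_m^+}-e^{f_m^-}=e^{f_m^-}(e^{m|_{|\Sigma|}}-1)$, the use of units in the localization to reverse the inclusion, and the formal identity $\ker(\phi_t)=\widetilde\phi_t^{-1}(\cK)=\J_t+\ker(\widetilde\phi_t)$ --- is precisely the content that the paper compresses into that one sentence. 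All the individual steps check out, including the point you flagged as requiring care, that $E_m$ lives in the localized ring because $1-x_\rho$ is a unit there.
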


\begin{proof}
    Corollary~\ref{cor:surj1} yields an isomorphism
    \[
    \frac{\Z[x_\rho \; | \; \rho\in\Sigma(1)]_t}{\I_t}\cong\PE(\Sigma),
    \]
    and this isomorphism identifies $\J_t$ with $\langle e^\ell-1\mid\ell\in\L(\Sigma)\rangle=\ker(\PE(\Sigma)\rightarrow\uPE(\Sigma))$. 
\end{proof}

We note that Corollary~\ref{cor:surj2} is equivalent to the presentation of equivariant K-rings due to Vezzosi and Vistoli \cite[Theorem~6.4]{VezzosiVistoli}. The next result is the bridge between their presentation and Theorem~\ref{thm2}.

\begin{proposition}\label{prop:unlocalize}
The ring homomorphism $\phi:\Z[x_\rho\mid\rho\in\Sigma(1)]\rightarrow\uPE(\Sigma)$ determined by $\phi(x_\rho)=1-[e^{\delta_\rho}]$ has $\ker(\phi)=\I+\J$.
\end{proposition}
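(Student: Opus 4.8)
The plan is to compare the unlocalized map $\phi$ with its localization $\phi_t$, whose kernel is already known to be $\I_t + \J_t$ by Corollary~\ref{cor:surj2}. Write $R = \Z[x_\rho \mid \rho \in \Sigma(1)]$, $A = R/(\I+\J)$, and let $\bar t$ denote the image of $t = \prod_\rho (1-x_\rho)$ in $A$. By Proposition~\ref{cor:invert}, $\phi$ is surjective, so we get an induced surjection $\bar\phi\colon A \twoheadrightarrow \uPE(\Sigma)$; the goal is to show $\bar\phi$ is injective, i.e.\ that $\ker(\phi) = \I+\J$. Since $\phi$ factors through $\phi_t$ after inverting $t$, and $\ker(\phi_t) = \I_t+\J_t = (\I+\J)_t$, localization gives $\ker(\phi)_t = (\I+\J)_t$, i.e.\ $\ker(\bar\phi)_{\bar t} = 0$ in $A_{\bar t}$. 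Thus every element of $\ker(\bar\phi)$ is annihilated by some power of $\bar t$, so $\ker(\bar\phi)$ is contained in the $\bar t$-power-torsion submodule of $A$; it remains to show this torsion submodule is zero, i.e.\ that $\bar t$ is a nonzerodivisor on $A$.

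First I would reduce to understanding the contraction of $\I_t + \J_t$ along the localization map $R \to R_t$. Concretely, $\ker(\bar\phi)$ is the image in $A$ of the contracted ideal $(\I+\J)_t \cap R$, so the proposition is equivalent to the purely commutative-algebra statement
\[
(\I+\J)_t^{c} \;=\; \big( (\I+\J)R_t \big) \cap R \;=\; \I+\J.
\]
The containment $\supseteq$ is automatic; the content is $\subseteq$, which says $\I+\J$ is saturated with respect to $t$, equivalently that no associated prime of $R/(\I+\J)$ contains $t = \prod_\rho(1-x_\rho)$, equivalently that no associated prime contains any of the factors $1-x_\rho$.

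The heart of the argument, as the introduction flags, will be a primary decomposition analysis. I would proceed as follows. Using that $\I$ is the Stanley--Reisner ideal, $R/\I$ is reduced with minimal primes $P_\sigma = \langle x_\rho \mid \rho \notin \sigma(1)\rangle$ indexed by maximal cones $\sigma$; modulo each $P_\sigma$ the map $\phi$ becomes a map out of a polynomial ring in the variables $x_\rho$, $\rho \in \sigma(1)$, whose target one can identify (via unimodularity of $\sigma$) with a localized Laurent-type ring, and the relations $\J$ cut out exactly the toric relations for the affine chart. The key point to extract is that $1-x_\rho$ maps to the unit $e^{\delta_\rho}$, hence $1-x_\rho$ is a nonzerodivisor on $A/P_\sigma A$ for every maximal cone $\sigma$, and more carefully that the full $A$ has no embedded primes and no associated prime meeting $\{1-x_\rho\}$. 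The cleanest route is probably to produce, for each $\rho$, an explicit expression showing $1-x_\rho$ acts invertibly or at least injectively on $A$: e.g.\ combine the nilpotence $(1-x_\rho)^{d+1} \equiv \text{(something)} \pmod{\I+\J}$-style identities coming from Proposition~\ref{prop:nilpotency} with the relation allowing $(1-x_\rho)^{-1}$ to be rewritten as a polynomial in $x_\rho$ (as in the proof of Proposition~\ref{cor:invert}), which would directly give that $1-x_\rho$, and hence $t$, is a unit in $A$ --- making the torsion argument above immediate.

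The main obstacle I expect is precisely this last step: showing that the relations $\J$, together with $\I$, already force each $1-x_\rho$ to be invertible in the \emph{unlocalized} quotient $A$, rather than only after localizing. Proposition~\ref{cor:invert}'s proof shows $[e^{-\delta_\rho}] = \phi(\sum_{i} x_\rho^i)$ in $\uPE(\Sigma)$ for a suitable finite sum, which lifts to saying $(1-x_\rho)\big(\sum_i x_\rho^i\big) - 1 \in \ker(\phi)$; but a priori $\ker(\phi)$ is only known to contain $\I+\J$, so this is circular unless one can exhibit such an inverse modulo $\I+\J$ directly. I would therefore spend the bulk of the work proving that identity at the level of $\I+\J$: starting from a relation in $\J$ (taking $m$ to be a functional with $\langle m, u_\rho\rangle = 1$ and $\langle m, u_{\rho'}\rangle \ge 0$ for the other rays of cones containing $\rho$, possible by unimodularity after perhaps refining) to get $(1-x_\rho)\cdot(\text{unit-like factor}) \equiv (\text{polynomial in the other }x_{\rho'})$, and bootstrapping with the nilpotence relations to invert it. Once $1-x_\rho$ is a unit in $A$ for all $\rho$, then $\bar t$ is a unit, the localization map $A \to A_{\bar t}$ is an isomorphism, $\ker(\bar\phi) = \ker(\bar\phi)_{\bar t} = 0$, and the proposition follows; Theorems~\ref{thm1} and~\ref{thm2} are then immediate corollaries given Proposition~\ref{cor:invert} and the isomorphism~\eqref{eq:K-uPE}.
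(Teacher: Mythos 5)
Your overall reduction is exactly the one the paper uses: factor $\phi$ through $\phi_t$, note $\ker(\phi_t)=\I_t+\J_t$ (Corollary~\ref{cor:surj2}), reduce to showing $(\I+\J)_t\cap R=\I+\J$, i.e.\ that $t$ avoids every associated prime of $R/(\I+\J)$. You also correctly identify that this boils down to establishing, for each ray $\rho_0$, the coprimality $\I+\J+\langle 1-x_{\rho_0}\rangle=\langle 1\rangle$ (equivalently, $1-x_{\rho_0}$ a unit in $A$), and that a primary decomposition argument then finishes. Where the proposal goes off the rails is in how you plan to prove that coprimality.

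Your plan is to locate $m\in M$ with $\langle m,u_{\rho_0}\rangle=1$ and $\langle m,u_{\rho'}\rangle\geq 0$ for every ray $\rho'$ of every cone containing $\rho_0$, and to read off the invertibility of $1-x_{\rho_0}$ from the corresponding relation in $\J$. Such an $m$ need not exist: the cones of $\Sigma^{(\rho_0)}$ can point in many directions, and no single linear functional need be simultaneously nonnegative on all of their rays. ``Refining'' the fan to force this is not available either, since the ideals $\I$ and $\J$ (and the ring $\uPE(\Sigma)$) depend on $\Sigma$. Likewise, the proposed ``nilpotence'' shortcut is circular in exactly the way you acknowledge: $(1-[e^{\delta_\rho}])^{d+1}=0$ in $\uPE(\Sigma)$ tells you $x_\rho^{d+1}\in\ker(\phi)$, but you would need $x_\rho^{d+1}\in\I+\J$, which is part of what the proposition asserts and is not obvious a priori.

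The missing idea in the paper's proof is a reduction by radicals across the prime decomposition of the Stanley--Reisner ideal. Since $\I=\bigcap_{\sigma\in\Sigma}\I_\sigma$ with $\I_\sigma=\langle x_\rho\mid\rho\notin\sigma(1)\rangle$, and since for any $I=\bigcap_i I_i$ one has $\sqrt{I+J}\supseteq\bigcap_i(I_i+J)$, it suffices to prove $\I_\sigma+\J+\langle 1-x_{\rho_0}\rangle=\langle 1\rangle$ for each single cone $\sigma$. When $\rho_0\notin\sigma(1)$ this is trivial because $x_{\rho_0}\in\I_\sigma$. When $\rho_0\in\sigma(1)$ one only needs $m$ strictly positive on the rays of that one cone $\sigma$ (available because $\sigma$ is pointed), and then the factors $(1-x_\rho)^{-\langle m,u_\rho\rangle}$ with $\langle m,u_\rho\rangle<0$ can be discarded modulo $\I_\sigma$ since those $\rho$ lie outside $\sigma(1)$. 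This per-cone localization of the argument, absorbing the ``bad'' rays into $\I_\sigma$, is precisely the trick you would need and the one your proposal does not supply.
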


\begin{proof}
First, note that $\ker(\varphi) \supseteq \I$ by Proposition~\ref{prop:kerneltilde}, and it is straightforward to check that $\ker(\varphi) \supseteq \J$, as well.  Therefore, $\varphi$ factors through the quotient
\begin{equation}
    \label{eq:factorphi}
    \Z[x_\rho \; | \; \rho \in \Sigma(1)] \rightarrow \frac{\Z[x_\rho \; | \; \rho \in \Sigma(1)]}{\I + \J}.
\end{equation}

We claim that the natural map
\begin{equation}
    \label{eq:localization}
    \frac{\Z[x_\rho \; | \; \rho \in \Sigma(1)]}{\I + \J} \rightarrow \left(\frac{\Z[x_\rho \; | \; \rho \in \Sigma(1)]}{\I + \J} \right)_t
\end{equation}
is an isomorphism.  In order to prove this, it suffices to check that, for any ray $\rho_0 \in \Sigma(1)$, the element
\[[1-x_{\rho_0}] \in \frac{\Z[x_\rho \; | \; \rho \in \Sigma(1)]}{\I + \J} \]
is a unit.

Toward this end, let $\rho_0 \in \Sigma(1)$ be fixed. It is a standard result of Stanley--Reisner theory that the ideal $\I$ can be expressed as
\[\I = \bigcap_{\sigma \text{ maximal}}\I_\sigma,\]
where the intersection is over all maximal cones of $\Sigma$ and
\[\I_\sigma \coloneq \langle x_\rho \; | \; \rho \in \Sigma(1) \setminus \sigma(1) \rangle.\]
We now claim that $x_{\rho_0} \in \I_\sigma + J$ for all maximal cones $\sigma$.  If $\rho_0$ is not contained in $\sigma$, we clearly have $x_{\rho_0} \in \I_\sigma$ and therefore $x_{\rho_0} \in \I_\sigma + \J$.  On the other hand, for maximal cones $\sigma$ that do contain $\rho_0$, the fact that $\Sigma$ is unimodular allows one to choose $m \in M$ such that $\langle m, u_{\rho_0}\rangle =1$ and $\langle m, u_\rho \rangle = 0$ for all $\rho \in \sigma(1) \setminus \rho_0$.  Then, in the quotient
\[\frac{\Z[x_\rho \; | \; \rho \in \Sigma(1)]}{\I_\sigma + \J},\]
one has 
\[[0] = \left[\prod_{\substack{\rho \text{ such that}\\ \langle m,u_\rho\rangle<0}}(1-x_\rho)^{-\langle m,u_\rho\rangle}-\prod_{\substack{\rho \text{ such that}\\ \langle m,u_\rho\rangle>0}}(1-x_\rho)^{\langle m,u_\rho\rangle}\right] = [1-(1 -x_{\rho_0})] = [x_{\rho_0}],\]
where the second equality comes from the fact that $[x_\rho] = [0]$ for all $\rho \notin \sigma(1)$ (by definition of $\I_\sigma$) and therefore both products can be taken over only $\rho \in \sigma(1)$.  Thus, in this case as well, we have $x_{\rho_0} \in \I_{\sigma} + \J$.

In light of the previous paragraph, for any maximal cone $\sigma$ we can write $x_{\rho_0} = y_\sigma + z_\sigma$
for some $y_\sigma \in I_\sigma$ and $z_\sigma \in \J$.  Multiplying this equation over all maximal cones $\sigma$ shows that
\[x_{\rho_0}^k \in \I + \J,\]
where $k$ is the number of maximal cones of $\Sigma$.  In other words, $[x_{\rho_0}] \in \Z[x_\rho]/(\I + \J)$ is nilpotent, from which it follows that $[1 - x_{\rho_0}]$ is a unit, as claimed.

We have therefore proven that the localization map \eqref{eq:localization} is an isomorphism.  By exactness of localization, we have canonical isomorphisms
\[\left(\frac{\Z[x_\rho \; | \; \rho \in \Sigma(1)]}{\I + \J} \right)_t \cong \frac{\left(\Z[x_\rho \; | \; \rho \in \Sigma(1)\right)_t}{(\I + \J)_t} \cong \frac{\left(\Z[x_\rho \; | \; \rho \in \Sigma(1)\right)_t}{\I_t + \J_t},\]
and Corollary~\ref{cor:surj1} shows that $\varphi_t$ induces an isomorphism
\[\frac{\left(\Z[x_\rho \; | \; \rho \in \Sigma(1)\right)_t}{\I_t + \J_t} \cong \uPE(\Sigma).\]
When combined with \eqref{eq:factorphi}, we have now factored $\varphi$ as a composition
\[\Z[x_\rho] \rightarrow \frac{\Z[x_\rho]}{\I + \J} \xrightarrow{\cong} \left(\frac{\Z[x_\rho]}{\I + \J} \right)_t \xrightarrow{\cong} \uPE(\Sigma).\]
Since all but the first map in this composition are isomorphisms, we conclude that the kernel of $\varphi$ is equal to the kernel of the first map, which is $\I + \J$, as claimed.
\end{proof}

Pulling everything together, we now prove the two main results of this paper.

\begin{proof}[Proof of Theorem~\ref{thm1}]
Proposition~\ref{cor:invert} tells us that $\phi$ is surjective, while Proposition~\ref{prop:unlocalize} tells us that $\ker(\phi)=\I+\J$. Together, these prove Theorem~\ref{thm1}.
\end{proof}

\begin{proof}[Proof of Theorem~\ref{thm2}]
Work of Brion and Vergne \cite{BrionVergne} implies that $\PE(\Sigma)\cong K_T(X_\Sigma)$, where the latter is the $T$-equivariant $\K$-ring of $X_\Sigma$; see also a more general result of Anderson and Payne \cite[Theorem~1.6]{AndersonPayne}, which spells out the isomorphism more explicitly. We note that $\PE(\Sigma)\cong K_T(X_\Sigma)$ is a module over the group algebra $\Z[M]$---which can be identified with the representation ring of $T$---where the action on $\PE(\Sigma)$ is via the ring homomorphism \[\ZZ[M] \to \PE(\Sigma), \qquad m\mapsto e^m.\] That $\uPE(\Sigma)\cong K(X_\Sigma)$ then follows from a result of Merkurjev \cite[Corollary~4.4]{Merkurjev}, which shows that $K(X_\Sigma)$ is the quotient of $K_T(X_\Sigma)$ by the ideal generated by the image under  $\ZZ[M] \to \PE(\Sigma)$ of the kernel of the ``rank'' map $\Z[M]\rightarrow\Z$ that sends a linear combination to the sum of its coefficients.  In our notation, that ideal of $\PE(\Sigma)$ is simply $\langle e^\ell-1\mid\ell\in\L(\Sigma)\rangle$. Equipped with the isomorphism $\uPE(\Sigma)\cong K(X_\Sigma)$, Theorem~\ref{thm2} then follows from Theorem~\ref{thm1}.
\end{proof}

\bibliographystyle{alpha}
\bibliography{references}

\end{document}